\newtheorem{theo}{Theorem}[section]
\newtheorem{lemma}[theo]{Lemma}
\newtheorem{cor}[theo]{Corollary}
\newtheorem*{theononumber}{Theorem}
\newtheorem*{conjnonumber}{Conjecture}
\theoremstyle{definition}
\newtheorem{defi}[theo]{Definition}
\newtheorem{rem}[theo]{Remark}
\newcommand{\f}{\phi}
\newcommand{\gal}{\operatorname{Gal}}
\newcommand{\Gl}{\operatorname{GL}}
\newcommand{\Aut}{\operatorname{Aut}}
\newcommand{\trdeg}{\operatorname{trdeg}}
\newcommand{\N}{\mathcal{N}}
\newcommand{\de}{\partial}
\newcommand{\C}{\mathbb{C}}
\newcommand{\CC}{\mathcal{C}}
\newcommand{\rank}{\operatorname{rank}}
\newcommand{\QQ}{\ensuremath{\mathbb{Q}}}
\newcommand{\FF}{\ensuremath{\mathbb{F}}}
\title{Free differential Galois groups}
\author{Annette Bachmayr, David Harbater, Julia Hartmann and Michael Wibmer}
\date{October 20, 2020}
\thanks{The first author was funded by the Deutsche Forschungsgemeinschaft (DFG) - grants MA6868/1-1, {MA6868/1-2}. The second and third authors were supported on NSF grants DMS-1463733 and DMS-1805439.  The fourth author was supported by the NSF grants DMS-1760212, DMS-1760413, DMS-1760448 and the Lise Meitner grant M 2582-N32 of the Austrian Science Fund FWF.  We also acknowledge support from NSF grant DMS-1952694}
\subjclass[2010]{Primary: 12H05, 12F12, 34M50. Secondary: 14L15, 20G15.}
\keywords{Picard-Vessiot theory, differential algebra, inverse differential Galois problem, embedding problems, linear algebraic groups, proalgebraic groups}
\begin{document}

\begin{abstract}
We study the structure of the absolute differential Galois group of a rational function field over an algebraically closed field of characteristic zero.  In particular, we relate the behavior of differential embedding problems to the condition that the absolute differential Galois group is free as a proalgebraic group.  Building on this, we prove Matzat's freeness conjecture in the case that the field of constants is algebraically closed of countably infinite transcendence degree over $\QQ$. This is the first known case of the twenty year old conjecture.

\end{abstract}

\maketitle

\section{Introduction}

In this paper, we prove the first known case of a conjecture due to Matzat on the freeness of  absolute differential Galois groups of rational geometric function fields of characteristic zero:
\begin{conjnonumber}[Matzat's conjecture] 
If $k$ is an algebraically closed field of characteristic zero, the absolute differential Galois group of $k(x)$ is the free proalgebraic group on a set of cardinality $|k|$. 
\end{conjnonumber}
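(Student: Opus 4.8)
The strategy is to characterize freeness of a proalgebraic group of infinite rank $m$ by three conditions --- projectivity, rank exactly $m$, and an abundance condition on solutions of embedding problems --- and then to verify all three for $G$, the absolute differential Galois group of $k(x)$, with $m = |k|$. The first step is to upgrade the embedding-problem criterion for freeness recorded above from the countable (Iwasawa-type) regime to arbitrary infinite rank. Concretely, I would prove the proalgebraic analog of Melnikov's theorem: a projective proalgebraic group $G$ of rank $m$ is free on a set of cardinality $m$ if and only if every nontrivial differential embedding problem for $G$ admits $m$ independent proper solutions. For $m = |k|$ this demands that each embedding problem have $|k|$ essentially distinct solutions --- strictly more than mere solvability, and exactly the requirement the countable case avoids.

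Second, I would establish projectivity of $G$. Since $k$ is algebraically closed of characteristic zero, $k(x)$ behaves like a differential field of cohomological dimension one: every differential embedding problem admits a weak solution. I would deduce this from differential patching over $\mathbb{P}^1_k$ in the sense of field and algebraic patching, splitting a given extension over two complementary patches and glueing a weak solution across the overlap. Projectivity is the structural input that makes a rank count suffice. Third, I would compute $\rk(G) = |k|$. The upper bound $\rk(G) \le |k|$ follows by counting: there are only $|k|$ linear differential equations, hence only $|k|$ Picard--Vessiot extensions, over $k(x)$. For the lower bound I would exhibit $|k|$ independent quotients of $G$: the equations $y' = f$ for $f \in k(x)$ produce $\Ga$-torsors, and choosing $|k|$ suitable $f$ yields $|k|$ algebraically independent such quotients, forcing $\rk(G) \ge |k|$.

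The crux --- and the step where the countable-case argument genuinely breaks down --- is the abundance condition. I would solve each differential embedding problem for $G$ by differential patching, but now arranging the patching data to vary over the $|k|$ distinct closed points of $\mathbb{P}^1_k$: localizing the problem at a point $P$ and solving it there (using that the completed local differential field is large enough) gives a proper solution whose ramification is concentrated at $P$, and solutions attached to distinct points are independent. This produces the required family of $|k|$ independent proper solutions attached to each embedding problem, upgrading solvability to the Melnikov condition of the first step.

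The main obstacle is twofold and lies precisely here. First, proving the uncountable-rank freeness criterion in the proalgebraic category forces one to control an $m$-indexed transfinite tower of embedding problems rather than a countable chain, so the back-and-forth construction underlying the Iwasawa case must be replaced by a transfinite induction that preserves projectivity and the abundance of solutions at every stage. Second, one must arrange the differential patching so that the $|k|$ point-localized solutions are \emph{simultaneously} proper and mutually independent; this requires a differential analog of Haran's diamond and independence machinery, which is unnecessary when $m$ is countable and a single solution per embedding problem suffices. I expect these two ingredients --- the Melnikov-type criterion and the independence of patched solutions over an uncountable set of points --- to be the decisive difficulties in passing from the countably generated setting to arbitrary algebraically closed $k$.
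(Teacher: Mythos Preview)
The statement you are attempting is a \emph{conjecture} in the paper, not a theorem; the paper does not prove it in general. What the paper does establish is the special case where $k$ is algebraically closed of countably infinite transcendence degree over $\mathbb{Q}$ (Theorem~\ref{theo: Matzat's conjecture}), and the argument there is much shorter than your plan: by Corollary~\ref{cor: differential Iwasawa} (the Iwasawa-type criterion), it suffices to solve every differential embedding problem of finite type over $k(x)$, and this solvability is imported from \cite{BachmayrHartmannHarbaterPopLarge}. No projectivity statement, no Melnikov-type abundance, and no transfinite induction enters in the countable case.

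Your plan targets the full conjecture, which remains open. Two remarks on how it compares with what is actually known. First, your ``step one'' --- the proalgebraic Melnikov criterion --- is already available and need not be reproved: Theorem~\ref{theo: equivalent to have free differential Galois group} here, built on \cite[Theorems~3.24 and~3.42]{Wibmer:FreeProalgebraicGroups}, gives precisely the equivalence between freeness on a set of cardinality $\kappa$ and the existence of $\kappa$ linearly disjoint solutions to every nontrivial finite-type differential embedding problem (condition~(vii)). Moreover, that criterion does \emph{not} isolate projectivity as a separate hypothesis; the saturation conditions (ii)--(vii), together with $\rank(\Gamma)=\kappa\ge|k|$, are directly equivalent to freeness. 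So your proposed detour through projectivity and a transfinite back-and-forth is unnecessary: the group-theoretic reduction is finished. Second, the entire content of the conjecture is therefore your abundance step: producing $|k|$ linearly disjoint solutions to each nontrivial finite-type embedding problem over $k(x)$ when $k$ is uncountable. Your idea of localizing patching data at the $|k|$ closed points of $\mathbb{P}^1_k$ is the natural picture, but neither this paper nor \cite{BachmayrHartmannHarbaterPopLarge} carries it out, and the differential analog of Haran's independence/diamond machinery that you invoke does not currently exist in the literature. You have correctly located the obstacle, but the proposal does not overcome it; what you have written is a plan for an open problem, not a proof.
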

Here $k(x)$ is a differential field with respect to the derivation $d/dx$.

 This conjecture was stated during the 1999 MSRI program {\em Galois Groups and Fundamental Groups} and has stymied researchers since. Implicit in the conjecture was the existence of a suitable notion of freeness for proalgebraic groups.  This notion had been introduced and studied in \cite{LuMa82} and \cite{LuMa83} in the case of prounipotent groups, but for the general case the notion was established only in \cite{Wibmer:FreeProalgebraicGroups}.

In the present paper, we obtain several equivalent conditions that characterize the freeness of an absolute differential Galois group in terms of differential embedding problems; see Theorem~\ref{theo: equivalent to have free differential Galois group}. This theorem is shown over an arbitrary differential field of characteristic zero with algebraically closed field of constants. When the differential field is countable, we obtain a particularly simple criterion (see Corollary~\ref{cor: differential Iwasawa}):

\begin{theononumber}
Let $K$ be a countable differential field with algebraically closed field of constants of characteristic zero.
Then the absolute differential Galois group of $K$ is free on a countably infinite set if and only if every differential embedding problem of finite type is solvable. 
\end{theononumber}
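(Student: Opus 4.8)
\emph{Plan.} The strategy is to obtain this statement by specializing Theorem~\ref{theo: equivalent to have free differential Galois group} to a countable differential field, where its list of equivalent conditions collapses to the single criterion stated here. Two observations drive this. First, if $K$ is countable then its field of constants $k$ is a countable algebraically closed field of characteristic zero, so $|k| = \aleph_0$ and ``free on a set of cardinality $|k|$'' reads as ``free on a countably infinite set''. Second, $\gal(K)$ has countable type: over a countable field there are, up to isomorphism, only countably many Picard--Vessiot extensions of $K$ (each Picard--Vessiot ring is a finitely generated $K$-algebra equipped with a derivation, and there are only countably many such up to isomorphism), so $\gal(K)$ is an inverse limit of a countable directed system of linear algebraic groups and in particular is topologically generated by a countable set.

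For the implication ``$\gal(K)$ free $\Rightarrow$ every differential embedding problem of finite type is solvable'' I would use that a free proalgebraic group on an \emph{infinite} set has every embedding problem of finite type \emph{properly} solvable --- part of the content of Theorem~\ref{theo: equivalent to have free differential Galois group} and of the general theory of free proalgebraic groups --- together with the translation that, for $\gal(K)$, an embedding problem of finite type \emph{is} the same datum as a differential embedding problem of finite type over $K$. Indeed, under the correspondence between algebraic quotients of $\gal(K)$ and Picard--Vessiot extensions of $K$, a surjection $G' \twoheadrightarrow H$ of linear algebraic groups together with a surjection $\gal(K) \twoheadrightarrow H$ is the same as a surjection $G' \twoheadrightarrow \gal(L/K)$ over the Picard--Vessiot extension $L/K$ cut out by the quotient, and a proper proalgebraic solution is precisely a Picard--Vessiot extension $M \supseteq L$ realizing $G'$ compatibly over $K$.

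For the converse I would invoke the countable case of Theorem~\ref{theo: equivalent to have free differential Galois group}: a proalgebraic group of countable type all of whose finite type embedding problems are properly solvable is free on a countably infinite set. Applied to $\gal(K)$ --- of countable type by the first paragraph, and with all finite type embedding problems properly solvable by the translation above and the hypothesis --- this yields that $\gal(K)$ is free on a countably infinite set. One also checks that the degenerate possibilities are ruled out by the hypothesis: the trivial group cannot properly solve the embedding problem with trivial base quotient and $G' = \Gm$, and a free proalgebraic group of finite rank cannot properly solve an embedding problem whose solution group requires more topological generators than the rank permits; hence $\gal(K)$ must in fact be infinitely generated.

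The hard part is the converse, and specifically the argument behind that countable-case statement. One must first reduce an arbitrary embedding problem for $\gal(K)$ to finite type ones, and then build an isomorphism between $\gal(K)$ and the free proalgebraic group on a countably infinite set by a back-and-forth procedure along the two countable defining systems, extending a partial isomorphism at each stage by solving an auxiliary finite type embedding problem on each side. Because solutions of embedding problems are far from unique, one cannot simply pass to an inverse limit: the choices at successive stages must be forced to be compatible through an $\omega$-length induction. This is the proalgebraic analogue of Iwasawa's characterization of the free profinite group of countably infinite rank, and it is where the weight of Theorem~\ref{theo: equivalent to have free differential Galois group} sits.
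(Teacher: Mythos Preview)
Your core strategy---specialize Theorem~\ref{theo: equivalent to have free differential Galois group} with $\kappa=|K|=\aleph_0$---is exactly the paper's approach, and it is correct. The paper simply notes that the equivalence (i)$\Leftrightarrow$(iii) there, read with $\kappa=\aleph_0$, \emph{is} the corollary: the condition $\rank(L/K)<\aleph_0$ in (iii) forces $L/K$ to be of finite type, hence $H=\gal(L/K)$ is algebraic, and together with ``algebraic kernel'' this forces $G$ itself to be algebraic; so (iii) becomes precisely ``every differential embedding problem of finite type over $K$ is solvable.'' You should make this identification explicit, since you never say which item of the theorem you are matching.

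Almost everything else in your proposal is surplus to requirements. The countable-type observation for $\gal(K)$, the translation between group-theoretic and differential embedding problems, the exclusion of degenerate cases, and the back-and-forth sketch are all already packaged inside Theorem~\ref{theo: equivalent to have free differential Galois group} (and ultimately inside the cited results of \cite{Wibmer:FreeProalgebraicGroups}); you should not re-argue them here. Also, your paraphrase of the converse as ``a proalgebraic group of countable type all of whose finite type embedding problems are properly solvable is free'' misquotes Theorem~\ref{theo: equivalent to have free differential Galois group}, which is a statement about differential fields, not abstract proalgebraic groups. The purely group-theoretic Iwasawa-type statement you have in mind is Corollary~3.43 of \cite{Wibmer:FreeProalgebraicGroups}, and the paper does mention this as an alternative route; but if you cite Theorem~\ref{theo: equivalent to have free differential Galois group} directly, the deduction is a single sentence.
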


Combining that with the main result of \cite{BachmayrHartmannHarbaterPopLarge} on solutions to differential embedding problems over rational function fields gives the following theorem  (see Theorem~\ref{theo: Matzat's conjecture}):

\begin{theononumber}
Let $k$ be an algebraically closed field of countably infinite transcendence degree over $\QQ$. Then Matzat's conjecture holds for $k(x)$.
\end{theononumber}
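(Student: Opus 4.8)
The plan is to deduce the theorem by feeding the right field into the two results highlighted in the introduction: the ``differential Iwasawa'' criterion of Corollary~\ref{cor: differential Iwasawa} and the embedding-problem result of \cite{BachmayrHartmannHarbaterPopLarge}. The only genuinely new content at this stage is a cardinality bookkeeping step and a verification that the hypotheses of those two results are met and are mutually compatible.

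\textbf{Step 1: cardinality reduction.} First I would observe that an algebraically closed field $k$ of characteristic zero with $\trdeg_\QQ(k)=\aleph_0$ is countable: it is isomorphic to $\overline{\QQ(t_1,t_2,\dots)}$, and the field $\QQ(t_1,t_2,\dots)$ of rational functions in countably many variables is a countable union of countable subfields, with countable algebraic closure. Hence $|k|=\aleph_0$, so Matzat's conjecture for $k(x)$ is precisely the assertion that the absolute differential Galois group of $k(x)$ is free (as a proalgebraic group) on a countably infinite set.

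\textbf{Step 2: reduction to embedding problems.} Next I would set $K=k(x)$ with derivation $d/dx$. Since $k$ is countable, $K$ is countable; since $k$ is algebraically closed, the field of constants of $K$ is exactly $k$, which is algebraically closed of characteristic zero. Thus $K$ satisfies all the hypotheses of Corollary~\ref{cor: differential Iwasawa}, and the absolute differential Galois group of $K$ is free on a countably infinite set if and only if every differential embedding problem of finite type over $K$ is solvable.

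\textbf{Step 3: solving the embedding problems, and the main obstacle.} Finally I would invoke the main result of \cite{BachmayrHartmannHarbaterPopLarge}, which establishes solvability of differential embedding problems (of finite type) over the rational function field $k(x)$ precisely when $k$ is algebraically closed of sufficiently large transcendence degree over $\QQ$ — the infinite transcendence degree is what provides enough ``room'' to realize the required Picard--Vessiot extensions without enlarging the field of constants. Combining this with Step 2 yields that the absolute differential Galois group of $k(x)$ is free on a countably infinite set, which by Step 1 is Matzat's conjecture for $k(x)$. The substantive mathematics is entirely contained in Corollary~\ref{cor: differential Iwasawa} (hence ultimately in Theorem~\ref{theo: equivalent to have free differential Galois group}) and in \cite{BachmayrHartmannHarbaterPopLarge}; the expected difficulty here is purely one of matching conventions, namely checking that the class of ``differential embedding problems of finite type'' appearing in Corollary~\ref{cor: differential Iwasawa} coincides with the class solved in \cite{BachmayrHartmannHarbaterPopLarge}, and that the latter applies to $k$ of merely countably infinite transcendence degree.
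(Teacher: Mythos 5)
Your proposal is correct and follows essentially the same route as the paper: observe that $k$ (hence $K=k(x)$) is countable, reduce via Corollary~\ref{cor: differential Iwasawa} to solving all differential embedding problems of finite type over $K$, and invoke \cite[Cor.~4.5]{BachmayrHartmannHarbaterPopLarge} for that solvability. The only cosmetic quibble is your phrase ``precisely when'' in Step~3: the cited result gives solvability \emph{whenever} $k$ is algebraically closed of infinite transcendence degree, which is all that is needed.
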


Matzat's conjecture strengthens the inverse differential Galois problem, by asserting that every linear algebraic group $G$ is realizable in $|k|$ different ways (see Corollary~\ref{cor: inverse}(b)).
An affirmative answer to the inverse differential Galois problem over
$k(x)$ is known, for $k$ an algebraically closed field of
characteristic zero; i.e., every linear algebraic group over $k$ is a differential Galois group over $k(x)$. The solution was given by \cite{Tretkoff} over ${\mathbb C}(x)$
(building on Plemelj's work (\cite{Plemelj}) on Hilbert's 21st problem), and for more general algebraically closed fields of characteristic zero in \cite{Hart05} (building on work of a number of authors; see especially \cite{Kovacic:TheInverseProblemInTheGaloisTheoryOfDifferentialFields} and 
\cite{mitschisinger}). 

The results in this paper (and in  \cite{Wibmer:FreeProalgebraicGroups}) are motivated by analogous statements in ordinary Galois theory. The classical inverse Galois problem over a field $K$ asks whether every finite group $G$ is the Galois group of a Galois field extension of $K$.  This holds if $K=k(x)$ with $k$ algebraically closed: the case $k=\C$ is classical; the characteristic zero case holds by \cite[Exp.~XIII, Cor.~2.12]{SGA1}; and the general case was shown in \cite[Corollary~1.5]{Ha84}.  Going beyond this problem, the geometric Shafarevich conjecture states that the absolute Galois group of $k(x)$ is a free profinite group if $k$ is algebraically closed (in analogy, for $k=\bar \FF_p$, to the original Shafarevich conjecture in number theory, which says that the absolute Galois group of $\QQ^{\mathrm{ab}} = \QQ^{\mathrm{cycl}}$ is free). The geometric Shafarevich conjecture was proven in characteristic zero in \cite{Do64}, and in the general case in \cite{Har95} and \cite{Pop95}.  Moreover it was shown there that the absolute Galois group of $k(x)$ is free of rank $|k|$.  These proofs relied on a result of Iwasawa \cite{Iwa53} in the countable case, and a result of Melnikov-Chatzidakis in the general case (see \cite[Lemma~2.1]{Ja}); those results say that a profinite group is free of the desired rank if and only if all embedding problems are ``sufficiently solvable''.
Proalgebraic analogs of these results were proven in \cite{Wibmer:FreeProalgebraicGroups}, and we rely on those to obtain our theorems above.

A. Magid has recently shown in \cite{Magid:TheDifferentialGaloisGroupOfTheMaximalProunipotentExtensionIsFree} that the maximal prounipotent quotient of the absolute differential Galois group of any differential field $K$ of characteristic zero with algebraically closed constants is a free prounipotent group.

We should note that there is another interpretation of Matzat's conjecture. For $k$ an algebraically closed field of characteristic zero, the category of finite dimensional differential modules over $K=k(x)$ is naturally a neutral tannakian category over $k$. (See \cite[Section 9]{Deligne:categoriestannakien} or \cite[Example~B.23]{SingerPut:differential} for details.) 
The corresponding fundamental group scheme is the absolute differential Galois group of $K$ and there is an equivalence of tannakian categories between the category of finite dimensional differential modules over $K$ and the category of finite dimensional representations of the absolute differential Galois group of $K$.

According to the proof of  \cite[Theorem 2.17]{Wibmer:FreeProalgebraicGroups} the free proalgebraic group on a set $X$ can be constructed as the fundamental group of the neutral tannakian category of all cofinite representations of $F_X$. Here $F_X$ is the (abstract) free group on the set $X$, and a finite dimensional $k$-linear representation of $F_X$ is called cofinite if all but finitely many elements of $X$ act trivially. One thus obtains a tannakian reformulation of Matzat's conjecture:

\begin{conjnonumber}[Tannakian formulation of Matzat's conjecture] For $k$ an algebraically closed field of characteristic zero, the tannakian category of finite dimensional differential modules over $K=k(x)$ is equivalent to the tannakian category of all cofinite representations of $F_X$, where $X$ is a set of cardinality $|k|$.
\end{conjnonumber}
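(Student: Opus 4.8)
The plan is to prove the statement by reducing it, via tannakian duality, to the group-theoretic form of Matzat's conjecture and then establishing freeness of the absolute differential Galois group $\mathcal{G}$ of $K = k(x)$. The tannakian formulation is by construction a reformulation: as recalled just above, $\mathcal{G}$ is the fundamental group scheme of the neutral tannakian category of finite-dimensional differential modules over $K$, while by the proof of \cite[Theorem~2.17]{Wibmer:FreeProalgebraicGroups} the free proalgebraic group on a set $X$ is the fundamental group scheme of the tannakian category of cofinite representations of $F_X$. Since a neutral tannakian category over $k$ is determined up to tensor equivalence by its fundamental group scheme, the asserted equivalence of categories holds precisely when $\mathcal{G}$ is isomorphic to the free proalgebraic group on a set of cardinality $|k|$. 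Everything therefore reduces to showing that $\mathcal{G}$ is free of rank $|k|$.

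To prove freeness I would invoke Theorem~\ref{theo: equivalent to have free differential Galois group}, valid over any differential field of characteristic zero with algebraically closed constants, which translates freeness of $\mathcal{G}$ into the solvability of differential embedding problems. When $k$ is countable the field $K=k(x)$ is countable, and Corollary~\ref{cor: differential Iwasawa} reduces the problem to showing that every differential embedding problem of finite type over $k(x)$ is solvable; this is supplied by \cite{BachmayrHartmannHarbaterPopLarge} and yields Theorem~\ref{theo: Matzat's conjecture}. For an arbitrary algebraically closed $k$ one instead needs the proalgebraic analog of the Melnikov--Chatzidakis criterion established in \cite{Wibmer:FreeProalgebraicGroups}: for an infinite cardinal $m=|k|$, a proalgebraic group of rank at most $m$ is free of rank $m$ if and only if every nontrivial finite-type embedding problem for it has exactly $m$ solutions. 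I would thus have to verify both the rank bound — that $\mathcal{G}$ has rank at most $|k|$, which should follow from $|K|=|k|$ — and the stronger condition providing $|k|$ solutions to each nontrivial finite-type embedding problem.

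The decisive step, and the genuine obstacle, is to solve differential embedding problems over $k(x)$ for uncountable $k$ while controlling the number of independent solutions. The patching methods of \cite{BachmayrHartmannHarbaterPopLarge}, which exploit that $k((t))$ and related fields are large, do produce solutions to finite-type embedding problems, but the existing arguments are organized around exhausting the field by a countable tower and solving one embedding problem at a time. To reach cardinality $|k|$ one must instead solve a family of embedding problems indexed by a set of size $|k|$, arranging the solutions to be sufficiently independent (linearly disjoint in the appropriate sense) that no nontrivial relations are introduced and exactly $m$ distinct solutions are obtained. Carrying the patching construction out uniformly over such an uncountable index set, while preserving the independence demanded by the Melnikov--Chatzidakis criterion, is precisely what is missing. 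It is this uncountable bookkeeping inside the embedding-problem solver, rather than any gap in the tannakian or group-theoretic reductions, that prevents the present techniques from settling the conjecture beyond countably infinite transcendence degree.
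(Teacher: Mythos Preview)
The statement you are addressing is not a theorem in the paper; it is explicitly labeled a \emph{conjecture}. The paper does not prove it. What the paper does do, in the paragraph immediately preceding the displayed conjecture, is argue that this tannakian statement is \emph{equivalent} to the original group-theoretic form of Matzat's conjecture: the absolute differential Galois group of $K$ is the tannakian fundamental group of the category of finite-dimensional differential modules, the free proalgebraic group on $X$ is (by the proof of \cite[Theorem~2.17]{Wibmer:FreeProalgebraicGroups}) the fundamental group of the category of cofinite $F_X$-representations, and a neutral tannakian category is determined up to tensor equivalence by its fundamental group. Your first paragraph reproduces exactly this equivalence argument, and it matches the paper's reasoning.

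Your second and third paragraphs go beyond the paper: you attempt to actually establish the conjecture, not merely its equivalence with the group-theoretic form. You correctly recover the countable case via Corollary~\ref{cor: differential Iwasawa} and \cite{BachmayrHartmannHarbaterPopLarge}, and you correctly identify the Melnikov--Chatzidakis-type criterion from \cite{Wibmer:FreeProalgebraicGroups} as the relevant tool for uncountable $k$, together with the genuine obstruction (producing $|k|$ independent solutions to each finite-type embedding problem). This analysis is accurate, but it is not a proof---as you yourself concede in the final sentences. One small imprecision: in the countable case, \cite{BachmayrHartmannHarbaterPopLarge} supplies the solvability of all finite-type embedding problems only when $k$ has \emph{infinite} transcendence degree over $\QQ$, not for arbitrary countable $k$; this is why Theorem~\ref{theo: Matzat's conjecture} carries that hypothesis. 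So even in the countable regime your proposal does not cover, e.g., $k=\overline{\QQ}$.

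In summary: your first paragraph is a correct account of the only thing the paper actually claims here (the equivalence of formulations), and it agrees with the paper's argument. The remainder is a sound discussion of the state of the art, but it is not---and does not purport to be---a proof of the conjecture.
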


\section{Preliminaries}
\subsection{Proalgebraic groups}
In this subsection, we recall some definitions and properties of proalgebraic groups from \cite{Wibmer:FreeProalgebraicGroups}. We present the results in a form suitable for application in this paper, rather than presenting them in the greatest generality. In particular, we do not consider pro-$\mathcal C$-groups for arbitrary classes $\mathcal C$ (i.e., projective limits of groups contained in $\mathcal C$, for example pro-unipotent-groups), but only use the results in \cite{Wibmer:FreeProalgebraicGroups} for the class $\mathcal C$ of all affine group schemes of finite type over $k$.

Let $k$ be a field with algebraic closure $\bar k$. A \emph{proalgebraic group over $k$} (or more accurately a \emph{pro-affine algebraic group}) is a projective limit of affine group schemes of finite type over $k$. Projective limits exist in the category of proalgebraic groups and they can be taken pointwise, i.e., $(\varprojlim G_i)(R)=\varprojlim G_i(R)$ for any $k$-algebra $R$. The coordinate ring $k[\varprojlim G_i]$ is the direct limit of the coordinate rings $k[G_i]$. It is well-known that the concepts of ``proalgebraic groups'' and ``affine group schemes'' are equivalent. We say that a proalgebraic group is \textit{algebraic}, if it is an affine group scheme of finite type. 

Let $\phi:G \to H$ be a morphism of proalgebraic (e.g., algebraic) groups, and write $\phi(G)$ for its scheme-theoretic image (i.e., the smallest closed subscheme of $H$ through which $\phi$ factors).  We say that $\phi$ is an {\em epimorphism} if $\phi(G)=H$.  This condition holds if and only if the dual map $\phi^*:k[H] \to k[G]$ on coordinate rings is injective (or equivalently, faithfully flat); see \cite[Section~2.1]{Wibmer:FreeProalgebraicGroups}.

For a family $\phi_i \colon G_i\twoheadrightarrow H$, $i\in I$, of epimorphisms of algebraic groups, the fiber product of the groups $G_i$ over $H$ is a proalgebraic group with coordinate ring the direct limit of the rings $k[G_{i_1}]\otimes_{k[H]}\cdots \otimes_{k[H]}k[G_{i_n}]$, over all finite subsets $\{i_1,\dots, i_n\}$ of $I$ ordered by inclusion.

A closed normal subgroup $N$ of a proalgebraic group $G$ is called \textit{coalgebraic} if $G/N$ is algebraic. A set $\N$ of coalgebraic subgroups of $G$ is called a \textit{neighborhood basis at 1} for $G$ if for every coalgebraic subgroup $N$ of $G$ there exists an $N'\in \N$ with $N'\subseteq N$.

The \textit{rank} of a non-trivial proalgebraic group $G$ is defined as the smallest cardinal $\kappa$ such that there exists a neighborhood basis at 1 of cardinality $\kappa$. The rank of the trivial group is defined as zero. Note that if $G\neq 1$ is algebraic, then it is of rank one.  If $G$ is not algebraic, then the rank of $G$ equals the dimension of $k[G]$ as a $k$-vector space and it also equals the smallest cardinal $\kappa$ such that $k[G]$ can be generated as a $k$-algebra by a set of cardinality $\kappa$. In particular, $G$ is of finite rank if and only if it is algebraic (and then the rank is one or zero as noted above).

We now proceed to free proalgebraic groups. Let $X$ be a set and let $G$ be a proalgebraic group over $k$. We say that a map $\phi \colon X \to G(\bar k)$ \textit{converges to 1} if for every coalgebraic subgroup $N$ of $G$ almost all elements of $X$ map into $N(\bar k)$.  Following \cite[Def.~2.18]{Wibmer:FreeProalgebraicGroups}, a
proalgebraic group $\Gamma$ together with a map $\iota\colon X\to \Gamma(\bar k)$ that $\iota$ converges to~$1$ is called a  \textit{free proalgebraic group} on $X$ if $\Gamma$ satisfies the following universal property: For every other pair $(\Gamma',\iota')$ with these properties there exists a unique morphism $\psi\colon \Gamma\to \Gamma'$ of proalgebraic groups with $\iota'=\psi_{\bar k}\circ \iota$:
$$\xymatrix{ X \ar[rr]^\iota \ar[rd]_{\iota'} & & \Gamma(\bar k) \ar[ld]^{\psi_{\bar k}}\\
	& \Gamma'(\bar k) & 	
	}$$
The existence of such a $\Gamma$ is shown in \cite[Theorem 2.17]{Wibmer:FreeProalgebraicGroups} and it is unique (up to isomorphism) by the universal mapping property.
If $k$ has characteristic zero and $|X|\geq|k|$, the rank of $\Gamma$ is~$|X|$ (see Corollary~3.12 of \cite{Wibmer:FreeProalgebraicGroups}).

	An \emph{embedding problem} for a proalgebraic group $\Gamma$ consists of epimorphisms $G\twoheadrightarrow H$ and $\Gamma\twoheadrightarrow H$ of proalgebraic groups. A \emph{(proper) solution} is an epimorphism $\Gamma\twoheadrightarrow G$ such that 
	$$\xymatrix{
		\Gamma \ar@{..>>}[d] \ar@{->>}[rd] &  \\
		G \ar@{->>}[r] & H
	}
	$$
	commutes. In \cite{Wibmer:FreeProalgebraicGroups}, such an embedding problem is called a pro-$\mathcal C$-embedding problem and it is called a $\mathcal C$-embedding problem if $G$ (and thus also $H$) are algebraic (as before, $\mathcal C$ is the class of affine group schemes of finite type over $k$).

\subsection{Differential Galois theory} \label{subsec: dgt}
 In this subsection we recall the basics of differential Galois theory. Classic references are \cite{SingerPut:differential} and \cite{Magid:LecturesOnDifferentialGaloisTheory}. In this paper, we consider infinite families of differential equations. In particular, we want to define the absolute differential Galois group of a differential field. Some of the intermediate results are also shown for general fields of constants (not necessarily algebraically closed). Differential Galois theory in this generality is treated in \cite{AmanoMasuokaTakeuchi:HopfPVtheory} (see \cite[Cor.\ 3.5] {Takeuchi:hopfalgebraicapproach} for a proof that our definition of Picard-Vessiot extensions given below is equivalent to the Hopf-algebraic definition in \cite{AmanoMasuokaTakeuchi:HopfPVtheory}).

For the remainder of the paper, $K$ denotes a differential field of characteristic zero and $k$ its field of constants; i.e., 
 the field $K$ is equipped with a derivation $\de\colon K\to K$ and  
$$k=K^\de=\{a\in K|\ \de(a)=0\}.$$

The most important example for us is the field $K=k(x)$ of rational functions over $k$ with derivation $\de=\frac{d}{dx}$.
We are interested in linear differential equations $$\de(y)=Ay,\ A\in K^{n\times n}.$$  
Differential Galois theory associates an algebraic group to such an equation. This is achieved by first constructing a so-called \emph{Picard-Vessiot extension} of $K$ for $\de(y)=Ay$. 
More generally, one associates a proalgebaic group to a (possibly infinite) family of differential equations
\begin{equation} \label{eq: family} \de(y)=A_iy,\ A_i\in K^{n_i\times n_i},\ i\in I.\end{equation}

\begin{defi} \label{defi:PVring}
	A differential field extension $L/K$ is a {\it Picard-Vessiot extension} for the family (\ref{eq: family}) if
	\renewcommand{\theenumi}{(\roman{enumi})}
\renewcommand{\labelenumi}{(\roman{enumi})}  
	\begin{enumerate}
		\item \label{Yi} for every $i\in I$ there exists $Y_i\in\Gl_{n_i}(L)$ such that $\de(Y_i)=A_iY_i$,
		\item $L$ is generated as a field extension of $K$ by the entries of all the $Y_i$,
		\item $L^\de=K^\de$.
	\end{enumerate}
	The differential subalgebra $R$ of $L$ generated over $K$ by all the entries and the inverses of the determinants of all $Y_i$ is called a {\it Picard-Vessiot ring} for (\ref{eq: family}). A matrix $Y_i$ as in \ref{Yi} is called a {\it fundamental solution matrix} for the differential equation $\de(y)=A_iy$.	
\end{defi}

If $k=K^\de$ is algebraically closed, there exists a Picard-Vessiot extension for any family of differential equations and it is unique up to an isomorphism of differential field extensions of $K$.

An extension $L/K$ of differential fields is a \emph{Picard-Vessiot extension} if it is a Picard-Vessiot extension for some family of linear differential equations; here the family is not uniquely determined by the extension.  In Definition~\ref{defi:PVring}, the Picard-Vessiot ring $R\subseteq L$ consists of all the differentially finite elements in $L$, i.e., the elements $f$ where the $K$-space spanned by $f, \de(f),\de^2(f),\dots$ is finite-dimensional. Thus $R$ does not depend on the choice of the family of differential equations, and so we may define the differential Galois group of a Picard-Vessiot extension as follows.  

In the literature the expressions  ``Picard-Vessiot extension'' and ``Picard-Vessiot ring'' usually refer to the Picard-Vessiot extension (or ring) of a single differential equation. However, in the sequel we will use this term in the general sense of Definition \ref{defi:PVring}. Thus, a Picard-Vessiot extension need not be finitely generated as a field extension of $K$.  

\begin{defi} \label{defi:GaloisGroup}
	Let $L/K$ be a Picard-Vessiot extension and let $R\subseteq L$ be its Picard-Vessiot ring.
	The {\it differential Galois group} $\gal(L/K)$ of $L/K$ is the functor from the category of $k$-algebras to the category of groups that associates to a $k$-algebra $T$ the group of differential automorphisms $\Aut_\de(R\otimes_k T/K\otimes_k T)$ of $R\otimes_k T$ over $K\otimes_k T$.  	
\end{defi}

Here $T$ is considered a differential ring with the trivial derivation. This functor is representable; i.e., the differential Galois group $G$ is a proalgebraic group over $k$. In fact, $k[G]=(R\otimes_K R)^\de$ is the ring of constants of $R\otimes_K R$ and the canonical map 
\begin{equation} \label{eqn: torsor}
R\otimes_k k[G]\overset{\simeq}{\longrightarrow} R\otimes_K R 
\end{equation}
is an isomorphism.

\begin{rem} \label{rk: PV fin equiv}
	Let $L/K$ be a Picard-Vessiot extension with Picard-Vessiot ring $R$ and differential Galois group $G$. Then the following statements are equivalent (see \cite[Cor.\ 3.15]{AmanoMasuokaTakeuchi:HopfPVtheory}):
	\begin{enumerate}[(i)]
		\item $L$ is finitely generated as a field extension of $K$.
		\item $R$ is finitely generated as a $K$-algebra.
		\item $G$ is algebraic.
		\item $L/K$ is a Picard-Vessiot extension of a single equation.
	\end{enumerate}
\end{rem}
A Picard-Vessiot extension (or ring) satisfying the above equivalent conditions is said to be \emph{of finite type}.  Another equivalent condition is that $L/K$ is a Picard-Vessiot extension of a finite family of equations.

\begin{defi} \label{defi: absolute differential Galois group}
Assume that $k=K^\de$ is algebraically closed.  A Picard-Vessiot extension $\widetilde{K}$ of $K$ for the family of all linear differential equations over $K$ is called a \emph{complete Picard-Vessiot compositum} for $K$. The differential Galois group of $\widetilde{K}/K$ is called the {\it absolute differential Galois group} of $K$. 
\end{defi}
Note that $\widetilde{K}$ is unique (up to a $K$-$\partial$-isomorphism) since $k=K^\de$ is algebraically closed. Therefore also the absolute differential Galois group of $K$ is unique (up to an isomorphism of proalgebraic groups over $k$).

The term \emph{complete Picard-Vessiot compositum} for $K$ was introduced in \cite[Def.~3.32]{Magid:LecturesOnDifferentialGaloisTheory} and we will adhere to this convention. Contrary to the situation in ordinary Galois theory, $\widetilde{K}$ can itself admit nontrivial Picard-Vessiot extensions and so is not ``differentially closed''.
We note that $\widetilde{K}$ is a \emph{universal} Picard-Vessiot field, in the sense of \cite[Chapter 10]{SingerPut:differential}, for the category of all finite dimensional differential modules over $K$. Moreover, the corresponding \emph{universal differential Galois group}, in the sense of \cite[Chapter 10]{SingerPut:differential}, agrees with the absolute differential Galois group of $K$. The absolute differential Galois group is known for the differential fields of formal Laurent series (\cite[Section 10.3]{SingerPut:differential}) and convergent Laurent series (\cite[Section 10.4]{SingerPut:differential}) over the complex numbers.

In the case of a Picard-Vessiot extension $L/K$ of finite type, over an algebraically closed field of constants $k = K^\de$ and with Picard-Vessiot ring $R$, it is traditional to identify the differential Galois group $G$ with its set of $k$-points; i.e., with the algebraic group $\Aut_\de(R/K) = \Aut_\de(L/K)$.  For $k$ algebraically closed this can still be done even if $L/K$ is not of finite type (e.g., as in Definition~\ref{defi: absolute differential Galois group}), though in this case $G$ is a proalgebraic group.

\medskip

Let $L/K$ be a Picard-Vessiot extension with Picard-Vessiot ring $R$ and differential Galois group~$G$. Let $T$ be a $k$-algebra and $g\in G(T)$. Then the automorphism $g\colon R\otimes_k T\to R\otimes_k T$ extends to an automorphism $\widetilde{g}$ of the total ring of fractions of $R\otimes_kT$. Note that the total ring of fractions of $R\otimes_k T$ contains $L$ as a subring. We say that \emph{$a\in L$ is fixed by $g$} if $\widetilde{g}(a)=a$.

For a closed subgroup $H$ of $G$ we set
$$L^H=\{a\in L|\ a \text{ is fixed by all } g\in H(T) \text{ for all $k$-algebras }  T \}.$$

	Let $L/K$ be a Picard-Vessiot extension with differential Galois group $G$.
Then there is the following \textbf{Galois correspondence}:
	\begin{enumerate}[(a)]
		\item The maps $M\mapsto \gal(L/M)$ and $H\mapsto L^H$ are inclusion reversing bijections that are inverse to each other, between the set of all intermediate differential fields $K\subseteq M\subseteq L$ and the set of all closed subgroups $H$ of $G$.
		\item An intermediate differential field $M$ is a Picard-Vessiot extension of $K$ if and only if $\gal(L/M)$ is a normal subgroup of $G$. In this case, the restriction map $\gal(L/K)\to \gal(M/K)$ is an epimorphism and thus induces an isomorphism $\gal(L/K)/\gal(L/M)\simeq \gal(M/K)$.
		\item The fixed field $L^{G^0}$ under the connected component of the identity of $G$ is the relative algebraic closure of $K$ in $L$ .
	\end{enumerate} 
	
Let $L/K$ be a Picard-Vessiot extension with differential Galois group $G$ and consider a family of closed subgroups $H_i\subseteq G$, $i \in I$. 
Then by the Galois correspondence, 
\begin{equation}\label{lemma: intersection}
L^{\bigcap_{i\in I}H_i}=\prod_{i\in I}L^{H_i},
\end{equation}
where the right hand side indicates the field compositum in $L$.

\begin{lemma}\label{lemma: isomorphic}
	Let $L_1/K$ and $L_2/K$ be Picard-Vessiot extensions that are contained in a common overfield $L$ with no new constants, i.e.,  $L^\de=K^\de$. Then $L_1$ and $L_2$ are isomorphic as differential $K$-algebras if and only if $L_1=L_2$.
\end{lemma}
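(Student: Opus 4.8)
The backward implication is immediate, so the content is the forward direction: assuming $\psi\colon L_1\to L_2$ is an isomorphism of differential $K$-algebras, I want to show that $L_1$ and $L_2$ coincide as subfields of $L$. The plan is to transport a system of fundamental solution matrices from $L_2$ to $L_1$ via $\psi^{-1}$ and then compare the two families of solutions inside the common overfield $L$, using the no-new-constants hypothesis.

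First I would invoke the definition of a Picard-Vessiot extension to fix a family $\de(y)=A_iy$ with $A_i\in K^{n_i\times n_i}$, $i\in I$, for which $L_2$ is a Picard-Vessiot extension, together with fundamental solution matrices $Y_i\in\Gl_{n_i}(L_2)$, so that $L_2=K\big(\{(Y_i)_{rs}\}_{i,r,s}\big)$. Since $\psi^{-1}\colon L_2\to L_1$ is an isomorphism commuting with $\de$ and fixing $K$ (hence the entries of each $A_i$), the matrices $Z_i:=\psi^{-1}(Y_i)\in\Gl_{n_i}(L_1)$ again satisfy $\de(Z_i)=A_iZ_i$, and $L_1=\psi^{-1}(L_2)=K\big(\{(Z_i)_{rs}\}_{i,r,s}\big)$.

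Next I would compare $Y_i$ and $Z_i$ inside $L$. Both lie in $\Gl_{n_i}(L)$ and solve the same equation $\de(y)=A_iy$, so a direct computation gives $\de(Y_i^{-1}Z_i)=-Y_i^{-1}A_iY_iY_i^{-1}Z_i+Y_i^{-1}A_iZ_i=0$. Hence $C_i:=Y_i^{-1}Z_i\in\Gl_{n_i}(L^\de)=\Gl_{n_i}(K^\de)$ by the hypothesis $L^\de=K^\de$. Therefore the entries of $Z_i=Y_iC_i$ are $K^\de$-linear combinations of the entries of $Y_i$, and conversely the entries of $Y_i=Z_iC_i^{-1}$ are $K^\de$-linear combinations of those of $Z_i$; since $K^\de\subseteq K$ this forces $K\big(\{(Z_i)_{rs}\}\big)=K\big(\{(Y_i)_{rs}\}\big)$ as subfields of $L$, i.e.\ $L_1=L_2$.

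The one place where the hypotheses genuinely enter is the step $Y_i^{-1}Z_i\in\Gl_{n_i}(K^\de)$, which rests on $L^\de=K^\de$; the rest is routine bookkeeping with fundamental solution matrices, so I do not anticipate a real obstacle. (One may equivalently run the argument at the level of Picard-Vessiot rings: $\psi$ restricts to an isomorphism $R_1\to R_2$ of the subrings of differentially finite elements, and the same constant-ratio computation yields $R_1=R_2$ inside $L$, whence $L_1=L_2$.)
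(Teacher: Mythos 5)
Your proof is correct and follows essentially the same route as the paper: transport fundamental solution matrices through the isomorphism, observe that the ratio matrix has entries in $L^\de=K^\de$ by the no-new-constants hypothesis, and conclude that both fields are generated over $K$ by the same elements. The only cosmetic difference is that you obtain both inclusions from a single family of equations, whereas the paper argues one inclusion and invokes symmetry.
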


\begin{proof}
	Let $\gamma \colon L_1\to L_2$ be an isomorphism of differential $K$-algebras. Fix a family of differential equations $\de(y)=A_iy$, $i\in I$, over $K$ such that $L_1$ is generated over $K$ by the entries of fundamental solution matrices $Y_i$ for $A_i$, $i\in I$. Then for every $i \in I$, $\gamma(Y_i)$ is also a fundamental solution matrix for $A_i$. Thus $Y_i^{-1}\gamma(Y_i)$ has entries in $L^\de=k$ and so $L_1$ is generated over $K$ by the entries of $\gamma(Y_i)$ for $i\in I$ and $L_1\subseteq L_2$ follows. Similarly, $L_2\subseteq L_1$.
\end{proof}

%---------------------------------------------------------------------------------------------------
\section{Differential embedding problems and Matzat's conjecture}
%---------------------------------------------------------------------------------------------------

In this section we give necessary and sufficient conditions for a differential field to have free absolute differential Galois group, in the case of an algebraically closed field of constants.  This is given in Theorem~\ref{theo: equivalent to have free differential Galois group}, which uses Theorems~3.24 and~3.42 in \cite{Wibmer:FreeProalgebraicGroups} to obtain statements about Picard-Vessiot extensions.  Afterwards, in Corollary~\ref{cor: differential Iwasawa} and Theorem~\ref{theo: Matzat's conjecture}, we obtain the theorems from the introduction, thereby proving
Matzat's conjecture in the case of an algebraically closed field of constants of countably infinite transcendence degree over~${\mathbb Q}$.

In order to carry this out, we first need to introduce and characterize the rank of a Picard-Vessiot extension, and study composita of Picard-Vessiot extensions.

\subsection{The rank of Picard-Vessiot extensions}

\begin{defi}
	The {\it rank} of a Picard-Vessiot extension $L/K$, denoted by $\rank(L/K)$, is the smallest cardinal number $\kappa$ such that $L/K$ is the Picard-Vessiot extension for a family of differential equations of cardinality $\kappa$.
\end{defi}

\begin{lemma} \label{lemma: rank of PVextension}
	Let $L/K$ be a Picard-Vessiot extension with Picard-Vessiot ring $R$. Assume that $L/K$ is not of finite type. Then the following cardinal numbers are equal:
\renewcommand{\theenumi}{(\roman{enumi})}
\renewcommand{\labelenumi}{(\roman{enumi})}	
\begin{enumerate}
		\item $\rank(L/K)$, i.e., the smallest cardinal $\kappa$ such that $L/K$ is a Picard-Vessiot extension for a family of differential equations of cardinality~$\kappa$.
		\item The smallest cardinal $\kappa$ such that $L$ can be generated as a field extension of $K$ by $\kappa$ many elements.
		\item The smallest cardinal $\kappa$ such that $R$ can be generated as a $K$-algebra by $\kappa$ many elements.
		\item\label{iv} The vector space dimension of $R$ over $K$.
		\item The rank of the differential Galois group of $L/K$.
	\end{enumerate} 
\end{lemma}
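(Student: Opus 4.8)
The plan is to establish the cyclic chain
\[
\text{(ii)}\ \le\ \text{(iii)}\ \le\ \text{(iv)}\ \le\ \text{(i)}\ \le\ \text{(ii)},
\]
which forces (i)--(iv) to coincide, and then to identify (iv) with (v) by means of the torsor isomorphism \eqref{eqn: torsor}. By Remark~\ref{rk: PV fin equiv}, the assumption that $L/K$ is not of finite type guarantees that none of the five cardinals is finite, since a finite value of any one of them would place $L/K$ in finite type; thus all the cardinal arithmetic below takes place among infinite cardinals, where $\kappa\cdot\aleph_0=\kappa$.

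Three of the four inequalities are immediate. As $L$ is the fraction field of $R$, any set of $K$-algebra generators of $R$ generates $L$ as a field extension of $K$: this gives (ii)$\,\le\,$(iii). A $K$-basis of $R$ generates $R$ as a $K$-algebra: this gives (iii)$\,\le\,$(iv). For (iv)$\,\le\,$(i), fix a family $\de(y)=A_iy$, $i\in I$, for which $L/K$ is Picard--Vessiot with $|I|=\rank(L/K)$ (an infinite cardinal); then $R$ is generated as a $K$-algebra by the entries of the $Y_i$ together with the elements $\det(Y_i)^{-1}$, since the subalgebra they span is already $\de$-stable (Jacobi's formula gives $\de(\det Y_i)=\operatorname{tr}(A_i)\det(Y_i)$). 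There are at most $|I|$ such generators, hence at most $|I|$ monomials in them, and these monomials span $R$ over $K$; therefore $\dim_K R\le|I|$.

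The substantive step is (i)$\,\le\,$(ii). Suppose $L=K(S)$ with $|S|$ equal to the cardinal in~(ii), say $\kappa$. Put $G=\gal(L/K)$ and let $\N$ denote the set of all coalgebraic subgroups of $G$; since $G$ is a projective limit of affine group schemes of finite type, the canonical map $G\to\prod_{N\in\N}G/N$ is a monomorphism, so $\bigcap_{N\in\N}N=1$. By the Galois correspondence, each $M_N:=L^N$ is a Picard--Vessiot extension of $K$ with $\gal(M_N/K)\cong G/N$ algebraic, hence of finite type, hence Picard--Vessiot for a single equation $\de(y)=A_Ny$; and applying \eqref{lemma: intersection} to the family $\{N\}_{N\in\N}$ gives $L=L^{\bigcap_{N\in\N}N}=\prod_{N\in\N}M_N$, the field compositum. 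A field compositum is the directed union of its finite subcomposita, so each of the $\kappa$ elements of $S$ lies in $\prod_{N\in\N_s}M_N$ for some finite subset $\N_s\subseteq\N$; setting $\N_0:=\bigcup_{s\in S}\N_s$, a set of cardinality at most~$\kappa$, we find that $\prod_{N\in\N_0}M_N$ contains $S$ and therefore equals $K(S)=L$. Thus $L$ is generated over $K$ by the entries of fundamental solution matrices for the at most $\kappa$ equations $A_N$ ($N\in\N_0$), while $L^\de=K^\de$ because $L/K$ is Picard--Vessiot; hence $L/K$ is Picard--Vessiot for the family $\{A_N\}_{N\in\N_0}$, and $\rank(L/K)\le|\N_0|\le\kappa$. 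This closes the cycle, so (i)=(ii)=(iii)=(iv).

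Finally, to get (iv)$\,=\,$(v): since $L/K$ is not of finite type, $G$ is not algebraic, so $\rank(G)=\dim_k k[G]$. The isomorphism \eqref{eqn: torsor} is $R$-linear for the action of $R$ through the left tensor factor on each side; as $R$ is a $K$-vector space of dimension $\dim_K R$, the right-hand side $R\otimes_K R$ is a free $R$-module of rank $\dim_K R$, while the left-hand side $R\otimes_k k[G]$ is a free $R$-module of rank $\dim_k k[G]$. Since $R$ is a nonzero commutative ring (indeed a domain, being a subring of $L$), the rank of a free $R$-module is well defined, whence $\dim_K R=\dim_k k[G]=\rank(G)$, i.e.\ (iv)=(v). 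I expect the main obstacle to be the inequality (i)$\,\le\,$(ii): the delicate points are that $L$ really is the compositum of \emph{all} of its finite-type Picard--Vessiot subextensions (equivalently $\bigcap_{N\in\N}N=1$) and that each individual element of $L$ involves only finitely many of the $M_N$; the rest is routine cardinal arithmetic and linear algebra.
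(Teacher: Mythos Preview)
Your proof is correct and follows essentially the same approach as the paper: the same monomial-counting argument links (iii) and (iv), and the torsor isomorphism \eqref{eqn: torsor} yields (iv)$=$(v) (the paper base-changes to $L$ rather than invoking well-definedness of free rank over the domain $R$, but this is cosmetic). The one place you take a longer route is the inequality (i)$\,\le\,$(ii): the paper simply observes that each field generator $a_i$ of $L$ lies in some finite-type Picard--Vessiot subextension $L_i\subseteq L$ (since $a_i$ is a rational function in finitely many entries of the $Y_j$), giving one equation per generator directly, whereas you recover this fact via the Galois correspondence and coalgebraic subgroups---valid, but more machinery than needed.
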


\begin{proof}
Let $\kappa_1,\dots,\kappa_5$ be the cardinal numbers defined in the above five items, respectively.
If $L/K$ is a Picard-Vessiot extension for an infinite family of differential equations of cardinality $\lambda$, then $L$ can be generated as a field extension of $K$ by $\lambda$ many elements. (Namely, the entries of the corresponding fundamental solution matrices.)
Conversely, if $L/K$ can be generated by $\lambda$ many elements, say $a_i,\ i\in I$ with $|I|=\lambda$, then we can choose for every $i\in I$ a differential equation $\de(y)=A_iy$ such that $a_i\in L_i\subseteq L$, where $L_i$ is a Picard-Vessiot extension for  $\de(y)=A_iy$. So $L$ is a Picard-Vessiot extension of the family $\de(y)=A_iy$, $i\in I$.
This shows that $\kappa_1=\kappa_2$. A similar argument shows that $\kappa_1=\kappa_3$. 

Clearly, $\kappa_3\leq \kappa_4$. Conversely, if $(f_i)_{i\in I}$ generate $R$ as a $K$-algebra, then the union of the sets $\{f_{i_1}^{e_1}\cdots f_{i_r}^{e_r} \mid i_1,\dots,i_r \in I\}$ over $r\in \mathbb N$ and $e_1,\dots,e_r \in \mathbb N$ generates $R$ as a $K$-vector space. This is a countable union of sets of cardinality less or equal than $|I^r|=|I|$ and thus $\kappa_4\leq |I|=\kappa_3$. 
Finally, if $G$ is the differential Galois group of $L/K$, then $R\otimes_k k[G]\simeq R\otimes_K R$ by (\ref{eqn: torsor}) of Section~\ref{subsec: dgt}. It follows that $L\otimes_k k[G]\simeq L\otimes_K R$. Therefore the $K$-dimension of $R$ agrees with the $k$-dimension of $k[G]$, i.e., the rank of $G$. 
\end{proof}

We note that by definition $\rank(K/K)=0$, and that the rank of a non-trivial Picard-Vessiot extension $L/K$ is finite (and then equal to $1$) if and only if $L/K$ is of finite type. Furthermore, $\rank(L/K)\geq \trdeg(L/K)$ for every Picard-Vessiot extension $L/K$ that is not of finite type.  Also, $\rank(L_1/K)\leq \rank(L_2/K)$ for Picard-Vessiot extensions $L_1,L_2/K$ with $L_1\subseteq L_2$, using the characterization given in Lemma~\ref{lemma: rank of PVextension} \ref{iv}.

As in \cite[Section~3.2]{Wibmer:FreeProalgebraicGroups}, the {\em dimension} of a proalgebraic group $G$ over $k$ is the transcendence degree over $k$ of the field of fractions of $k[G^0]/{\frak a}$, where $\frak a$ is the nilradical of $k[G^0]$.  This agrees with the usual notion of dimension in the case of algebraic groups.

\begin{lemma} \label{lemma: rank and dim correspond to usual}
	Let $L/K$ be a Picard-Vessiot extension with differential Galois group $G$. Then $\rank(L/K)=\rank(G)$ and $\trdeg(L/K)=\dim(G)$.
\end{lemma}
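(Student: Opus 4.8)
The plan is to deduce both equalities from the results already established for proalgebraic groups together with Lemma~\ref{lemma: rank of PVextension}. The statement splits naturally into the finite-type case and the infinite-type case, and in each case the two assertions about rank and dimension are essentially independent.

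For the rank equality, first dispose of the finite-type case. If $L/K$ is of finite type then $\rank(L/K)=1$ by the remark immediately following Lemma~\ref{lemma: rank of PVextension} (provided $L\neq K$; if $L=K$ then $G=1$ and both ranks are $0$). On the other hand, by Remark~\ref{rk: PV fin equiv} the group $G$ is algebraic, and a non-trivial algebraic group has rank one by the discussion of rank in the preliminaries, while the trivial group has rank zero. So $\rank(L/K)=\rank(G)$ in this case. If $L/K$ is not of finite type, then this is exactly the content of item \ref{iv} versus item (v) of Lemma~\ref{lemma: rank of PVextension}: that lemma asserts the equality of $\rank(L/K)$ with the rank of the differential Galois group (the fifth listed cardinal), so there is nothing further to prove.

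For the dimension equality $\trdeg(L/K)=\dim(G)$, I would argue uniformly, without splitting into cases. Recall from \eqref{eqn: torsor} that $R\otimes_k k[G]\simeq R\otimes_K R$ as $K$-algebras (indeed as differential rings), where $R$ is the Picard-Vessiot ring. The key point is that the relevant transcendence degrees can be read off from this torsor isomorphism after restricting attention to the connected part. By the Galois correspondence part (c), the fixed field $L^{G^0}$ is the relative algebraic closure $\overline{K}$ of $K$ in $L$, and $\gal(L/\overline K)=G^0$; moreover $\trdeg(L/K)=\trdeg(L/\overline K)$ since $\overline K/K$ is algebraic. Thus I may replace $K$ by $\overline K$ and assume $G$ is connected, so that $k[G]$ has the same field of fractions (up to nilpotents, which do not affect transcendence degree) as the ring appearing in the definition of $\dim(G)$. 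Now tensoring the torsor isomorphism up to the fraction field $L$ of $R$ gives $L\otimes_k k[G]\simeq L\otimes_K R$, hence $L\otimes_k \operatorname{Frac}(k[G]/\mathfrak{a})$ and $L\otimes_K \operatorname{Frac}(R)=L\otimes_K L$ have, after reducing, the same Krull dimension over $L$; this Krull dimension is $\dim(G)$ on one side (base change of $k$ to $L$ preserves dimension of the $k$-variety $\spec k[G^0]_{\red}$) and $\trdeg(\operatorname{Frac}(R)/K)=\trdeg(L/K)$ on the other. Combining these identifications yields $\trdeg(L/K)=\dim(G)$.

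The main obstacle I expect is making the dimension computation rigorous when $L/K$ is not finitely generated, so that $R$ is not a finite-type $K$-algebra and $G$ is only proalgebraic: one must check that transcendence degree and the proalgebraic notion of dimension (defined via $k[G^0]/\mathfrak a$) behave well under the infinite tensor products and direct limits involved, and that passing to the reduced connected component commutes with the base change along $K\to L$. The cleanest way around this is probably to write $G^0=\varprojlim (G/N)^0$ over coalgebraic $N$, correspondingly write $\overline K$ and the algebraic pieces of $L$ as directed unions of finite-type Picard-Vessiot subextensions, reduce the identity $\trdeg=\dim$ to the already-known finite-type case (where it is the classical fact that the dimension of the differential Galois group equals the transcendence degree of the Picard-Vessiot extension, see \cite{SingerPut:differential}), and then pass to the limit, using that both $\trdeg$ and $\dim$ are suprema of the corresponding quantities over the finite-type subextensions and finite-dimensional quotients respectively. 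I would present the finite-type case as the base step, cite it, and then spend the bulk of the argument on this limiting procedure.
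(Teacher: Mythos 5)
Your treatment of the rank equality is exactly the paper's: dispose of the finite-type case by hand and quote item (v) of Lemma~\ref{lemma: rank of PVextension} otherwise; nothing to add there. The problem is in the dimension equality. Your reduction to $G$ connected via part (c) of the Galois correspondence and your use of the torsor isomorphism $L\otimes_k k[G]\simeq L\otimes_K R$ are the right moves, but neither of your two proposed ways of finishing is sound. The Krull-dimension route does not work because $\dim(G)$ and $\trdeg(L/K)$ are \emph{cardinals} that may be uncountable, and Krull dimension (a natural number or $\infty$) cannot see the difference between distinct infinite cardinals; moreover, for algebras that are not of finite type, Krull dimension does not compute transcendence degree anyway. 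Your fallback limiting argument fails for the same reason: it is not true that the transcendence degree of a directed union is the supremum of the transcendence degrees of its finite-type pieces. Concretely, take $K=\mathbb C(x)$ and let $L$ be the compositum of the $\aleph_1$ many linearly disjoint Picard--Vessiot extensions $K(\log(x-c))$, each with group $\Ga$; then $G\cong\Ga^I$ with $|I|=\aleph_1$, every finite-type subextension has finite transcendence degree, so your supremum equals $\aleph_0$, whereas $\trdeg(L/K)=\dim(G)=\aleph_1$. This uncountable regime is precisely one the paper needs (e.g.\ in condition (vi) of Theorem~\ref{theo: equivalent to have free differential Galois group}, where $\trdeg(M/K)=\kappa$ for possibly uncountable $\kappa$), so the gap is not cosmetic.

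The repair is to avoid dimensions and suprema altogether and compare transcendence bases directly, which is what the paper does and which works uniformly for all cardinalities: after reducing to $G$ connected, $L\otimes_k k[G]=L\otimes_K R$ is an integral domain; if $A\subseteq k[G]$ is a transcendence basis of the field of fractions of $k[G]$ over $k$, then $1\otimes A$ is a transcendence basis of the field of fractions of $L\otimes_k k[G]$ over $L$, and if $B\subseteq R$ is a transcendence basis of $L$ over $K$, then $1\otimes B$ is a transcendence basis of the field of fractions of $L\otimes_K R$ over $L$. Comparing cardinalities of these two transcendence bases of the same field over $L$ gives $\dim(G)=\trdeg(L/K)$ with no finiteness, countability, or limiting argument required.
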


\begin{proof}
If $L/K$ is of finite type, then $G$ is algebraic by Remark~\ref{rk: PV fin equiv}, and the first equality then holds by definition.  On the other hand, if $L/K$ is not of finite type, then
$\rank(L/K)=\rank(G)$ by Lemma~\ref{lemma: rank of PVextension}.
	
 To show that $\trdeg(L/K)=\dim(G)$, let $K\subseteq R\subseteq L$ be the Picard-Vessiot ring and let $K_1$ denote the relative algebraic closure of $K$ in $L$. Then $\trdeg(L/K)=\trdeg(L/K_1)$ and $\gal(L/K_1)=G^0$ by the Galois correspondence. We can therefore assume that $G$ is connected (and hence that $K$ is relatively algebraically closed in $L$). Then $L\otimes_k k[G]=L\otimes_K R$ is an integral domain and if $A\subseteq k[G]$ is a transcendence basis for the field of fractions of $k[G]$ over $k$, then $1\otimes A$ is a transcendence basis for the field of fractions of $L\otimes_k k[G]$ over $L$. Similarly, if $B\subseteq R$ is a transcendence basis for $L$ over $K$, then $1\otimes B$ is a transcendence basis for the field of fractions of $L\otimes_K R$ over $L$. Thus the transcendence degree of the field of fractions of $k[G]$ agrees with $\trdeg(L/K)$.
\end{proof}

Clearly $\rank(L/K)\leq|K|$ for every Picard-Vessiot extension $L/K$. Therefore, if $G$ is the absolute differential Galois group of $K$, then $\rank(G)\leq |K|$. To say that the absolute differential Galois group of $K$ is the free proalgebraic group on a set of cardinality $|K|$ is thus a precise reformulation of the idea that the absolute differential Galois group of $K$ is as big as possible.

\subsection{Composita of Picard-Vessiot extensions}

\begin{lemma} \label{lemma: Picard-Vessiot rings flat}
	Let $L_1\subseteq L_2$ be an inclusion of Picard-Vessiot extensions of $K$ and let $R_1\subseteq R_2$ denote the corresponding inclusion of Picard-Vessiot rings. Then $R_2$ is a flat $R_1$-algebra.
\end{lemma}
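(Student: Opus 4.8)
The key structural fact to exploit is the torsor isomorphism (\ref{eqn: torsor}): for a Picard-Vessiot extension $M/K$ with ring $R_M$ and Galois group $G_M$, one has $R_M\otimes_k k[G_M]\cong R_M\otimes_K R_M$, and more generally $R_M$ is the increasing union of sub-$K$-algebras coming from finite-type sub-Picard-Vessiot extensions. The plan is to reduce to the finite-type case and then to a faithfully flat descent / torsor argument. First I would record that $R_2$ is the directed union of its finite-type Picard-Vessiot subrings $R_2^{(j)}$ (those $R$-subalgebras generated by fundamental solution matrices of finitely many equations), and likewise for $R_1$; since a directed colimit of flat $R_1$-algebras is flat, and flatness can be checked after the base change along the faithfully flat (indeed directed union of) maps $R_1^{(i)}\hookrightarrow R_1$, it suffices to show: whenever $R_1'\subseteq R_2'$ is an inclusion of \emph{finite-type} Picard-Vessiot rings over $K$, the larger is flat over the smaller. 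Here I should be a little careful: $R_1\subseteq R_2$ need not restrict to inclusions of the chosen finite-type pieces, but I can always enlarge $R_2^{(j)}$ to contain a given $R_1^{(i)}$ (add the finitely many generating equations of $R_1^{(i)}$ to the defining family of $R_2^{(j)}$), and $R_2$ is the directed union of such compatible pairs.

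For the finite-type case, let $R_1'\subseteq R_2'$ with Galois groups $H = \gal(\operatorname{Frac}(R_2')/ \operatorname{Frac}(R_1'))$ — more precisely, $L_2':=\operatorname{Frac}(R_2')$ is a Picard-Vessiot extension of $L_1':=\operatorname{Frac}(R_1')$ as well (a Picard-Vessiot extension of $K$ inside $L_2'$ is also Picard-Vessiot over any intermediate differential field, the equations being the same), so by (\ref{eqn: torsor}) applied over $L_1'$ we get that $R_2'$ is an $H$-torsor over $R_1'$, i.e. $R_2'\otimes_{R_1'} k[H]\cong R_2'\otimes_{R_1'}R_2'$. Wait — I must check that the Picard-Vessiot ring of $L_2'/L_1'$ is exactly $R_2'$ (not something larger): this holds because $R_2'$ already consists of the differentially finite elements of $L_2'$ over $K$, hence a fortiori over $L_1'$, and conversely any element of $L_2'$ differentially finite over $L_1'$ lies in $R_2'$ since $R_2'$ is the integral closure-type object generated by the solution matrices and their inverse determinants, which already generate $L_2'$ as a field over $L_1'$. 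Granting this, $R_2'$ is faithfully flat (in fact finitely presented and faithfully flat) over $R_1'$: an $H$-torsor over an affine base is a faithfully flat affine morphism, because after the faithfully flat base change $R_1'\to R_2'$ it becomes the trivial torsor $R_2'\otimes_k k[H]$, which is free over $R_2'$, and flatness descends along faithfully flat maps.

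Assembling: $R_2 = \varinjlim_j R_2^{(j)}$ is a filtered colimit of $R_1$-algebras each of which, being a filtered colimit of the flat $R_1^{(i)}$-modules $R_2^{(j)}$ base-changed appropriately, is flat over $R_1$; hence $R_2$ is flat over $R_1$. The step I expect to be the genuine obstacle is the bookkeeping in the reduction to finite type — namely verifying that the Picard-Vessiot ring of the relative extension $L_2'/L_1'$ coincides with $R_2'$ and organizing the two colimit systems so that the transition maps are compatible inclusions; once that is in place, the torsor isomorphism (\ref{eqn: torsor}) and faithfully flat descent of flatness do the rest essentially formally. An alternative, perhaps cleaner, route avoiding relative Picard-Vessiot rings: work directly with the Galois groups, noting $G_2 := \gal(L_2/K)\twoheadrightarrow G_1 := \gal(L_1/K)$ with kernel $N$, and deduce the torsor structure of $R_2$ over $R_1$ from that of $R_2$ over $K$ by "dividing by $N$"; but this still requires the same compatibility checks, so I would present the colimit-of-finite-type argument as the main line.
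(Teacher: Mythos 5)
Your reduction to the finite-type case is workable (a compatible directed system of pairs $R_1^{(i)}\subseteq R_2^{(j)}$ can indeed be arranged, and a filtered colimit of flat modules over a filtered colimit of bases is flat), but the finite-type case itself contains a genuine gap, and it is exactly the step you flagged. The Picard-Vessiot ring of $L_2'/L_1'$ is \emph{not} $R_2'$: by the paper's own description, the Picard-Vessiot ring of an extension of $L_1'$ consists of the elements of $L_2'$ that are differentially finite \emph{over $L_1'$}, and every element of $L_1'$ is trivially differentially finite over $L_1'$. Hence that ring contains $L_1'$, whereas $R_2'$ does not (for instance $R_2'$ is a finitely generated $K$-algebra while $L_1'$ typically is not); the Picard-Vessiot ring of $L_2'/L_1'$ is the compositum $L_1'R_2'$. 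Consequently, applying the torsor isomorphism~(\ref{eqn: torsor}) to $L_2'/L_1'$ yields $(L_1'R_2')\otimes_k k[H]\cong (L_1'R_2')\otimes_{L_1'}(L_1'R_2')$, which gives faithful flatness over the \emph{field} $L_1'$ --- vacuous information --- and not the isomorphism $R_2'\otimes_{R_1'} k[H]\cong R_2'\otimes_{R_1'}R_2'$ that your argument needs. The statement that $\operatorname{Spec}(R_2')\to\operatorname{Spec}(R_1')$ is an $H$-torsor is in fact a strictly stronger assertion than the flatness you are trying to prove (it is a descent/quotient-of-torsors theorem in its own right), so as written the argument is circular at its core.

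For comparison, the paper's proof avoids relative torsors and finite-type reductions entirely: the epimorphism $G_2\twoheadrightarrow G_1$ dualizes to an inclusion of Hopf algebras $k[G_1]\hookrightarrow k[G_2]$, which is automatically (faithfully) flat by \cite[Theorem 14.1]{Waterhouse:IntroductiontoAffineGroupSchemes}; base-changing to $L_2$ and using the torsor isomorphisms over $K$ (not over $L_1$) identifies this with $L_2\otimes_K R_1\to L_2\otimes_K R_2$, and faithfully flat descent along $K\to L_2$ then gives flatness of $R_2$ over $R_1$. If you want to salvage your approach, you would need to prove the $H$-torsor property of $R_2'$ over $R_1'$ by some independent means (e.g.\ Hopf-Galois descent in the style of Schneider), which is considerably harder than the flatness statement itself.
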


\begin{proof}
	Let $G_1$ respectively $G_2$ denote the corresponding differential Galois groups. Every inclusion of Hopf algebras over a field is flat (in fact faithfully flat); e.g., see \cite[Theorem 14.1]{Waterhouse:IntroductiontoAffineGroupSchemes}. Thus $k[G_2]$ is a flat $k[G_1]$-algebra. It follows that $L_2\otimes_k k[G_2]$ is a flat $L_2\otimes_k k[G_1]$-algebra. 
	Since $R_1\otimes_k k[G_1]=R_1\otimes_K R_1$ and $R_2\otimes_k k[G_2]=R_2\otimes_K R_2$ we have $L_2\otimes_k k[G_1]=L_2\otimes_K R_1$ and $L_2\otimes_k k[G_2]=L_2\otimes_K R_2$. Thus $L_2\otimes_K R_2$ is a flat $L_2\otimes_K R_1$-module. Since $L_2/K$ is faithfully flat it follows that $R_2$ is a flat $R_1$-algebra.
\end{proof}

We will need the following group theoretic characterization of linear disjointness
 of Picard-Vessiot extensions.

\begin{lemma} \label{lemma: linearly disjoint PV extensions}
	Let $L_0,L_1,L_2, L$ be Picard-Vessiot extensions of $K$ subject to the inclusions depicted in the following diagram:
	$$
\xymatrix{
	& L \ar@{-}[ld] \ar@{-}[rd] & \\
	L_1 \ar@{-}[rd] & & L_2 \ar@{-}[ld] \\
	& L_0  \ar@{-}[d] & \\
	& K &	
	}
	$$ and let $R_i\subseteq L_i$ denote the corresponding Picard-Vessiot rings.
Then
\begin{enumerate}[(a)]
\item the field compositum $L_1L_2 \subseteq L$ is a Picard-Vessiot extension whose Picard-Vessiot ring is the ring compositum of $R_1$ and $R_2$ in $L$ (i.e., the smallest subring containing $R_1$ and $R_2$),  and
\item there is a  canonical embedding 	
\begin{equation}\label{iso}
\gal(L_1L_2/K)\to \gal(L_1/K)\times_{\gal(L_0/K)} \gal(L_2/K)
\end{equation} which is an isomorphism if and only if $L_1$ and $L_2$ are linearly disjoint over $L_0$.
\end{enumerate}
\end{lemma}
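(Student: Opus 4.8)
The plan is to first establish part (a), then deduce part (b) from it using the torsor isomorphism and standard facts about fiber products of coordinate rings. For part (a), I would start by fixing families of differential equations for $L_1/K$ and $L_2/K$; their union is a family for which $L_1L_2$ is a candidate Picard-Vessiot extension. Conditions (i) and (ii) of Definition~\ref{defi:PVring} are immediate (the fundamental solution matrices live in $L_1L_2$, and the entries generate), so the only thing to check is the no-new-constants condition $(L_1L_2)^\de=k$. This follows because $L_1L_2\subseteq L$ and $L^\de=K^\de=k$ by hypothesis. Hence $L_1L_2/K$ is Picard-Vessiot. That its Picard-Vessiot ring is the ring compositum $R_1R_2$ uses the characterization of the Picard-Vessiot ring as the set of differentially finite elements: $R_1R_2$ consists of differentially finite elements (a sum and product of such are such), it generates $L_1L_2$ as a field, and it is a differential subalgebra containing the $Y_i^{\pm 1}$ and inverse determinants; comparing with the abstract description of the Picard-Vessiot ring of the union family gives equality. (Alternatively one can argue via Lemma~\ref{lemma: Picard-Vessiot rings flat} and the torsor structure.)

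For part (b), let $G=\gal(L_1L_2/K)$, $G_i=\gal(L_i/K)$, $G_0=\gal(L_0/K)$. Since $L_0\subseteq L_1\subseteq L_1L_2$ and $L_0\subseteq L_2\subseteq L_1L_2$ are chains of Picard-Vessiot extensions, the Galois correspondence (part (b)) gives epimorphisms $G\twoheadrightarrow G_i\twoheadrightarrow G_0$, and these are compatible over $G_0$, so by the universal property of the fiber product there is a canonical morphism $G\to G_1\times_{G_0}G_2$. I would then show it is a monomorphism (closed embedding): its kernel, as a subgroup of $G$, corresponds under the Galois correspondence to the compositum $L_1L_2$ of the fixed fields $L^{\ker(G\to G_1)}=L_1$ and $L^{\ker(G\to G_2)}=L_2$, using \eqref{lemma: intersection}; since $\ker(G\to G_1\times_{G_0}G_2)=\ker(G\to G_1)\cap\ker(G\to G_2)$, its fixed field is $L_1L_2$, hence the kernel is trivial. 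So \eqref{iso} is always a monomorphism.

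It remains to characterize when \eqref{iso} is an isomorphism, i.e., an epimorphism. Here I would pass to coordinate rings. Writing $R$, $R_i$ for the Picard-Vessiot rings, we have $k[G]=(R\otimes_K R)^\de$ and, by part (a), $R=R_1R_2$ is a quotient of $R_1\otimes_{R_0}R_2$; the coordinate ring of the fiber product $G_1\times_{G_0}G_2$ is $k[G_1]\otimes_{k[G_0]}k[G_2]$. Using the torsor isomorphisms $R_i\otimes_k k[G_i]\cong R_i\otimes_K R_i$ and base-changing everything to $L=L_1L_2$, the map $k[G_1]\otimes_{k[G_0]}k[G_2]\to k[G]$ becomes, after tensoring with $L$ over $k$, essentially the multiplication map $R_1\otimes_{R_0}R_2\to R_1R_2$ (suitably base-changed). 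Thus \eqref{iso} is an epimorphism if and only if $R_1\otimes_{R_0}R_2\to R_1R_2$ is injective, and one checks this is equivalent to $L_1$ and $L_2$ being linearly disjoint over $L_0$ (linear disjointness of the fields is equivalent, via the torsor/flatness description, to linear disjointness at the level of the Picard-Vessiot rings, because $L_i$ is obtained from $R_i$ by inverting elements of $R_0$ — or, more directly, tensor the inclusion $R_1R_2\hookrightarrow L_1L_2$ appropriately). I expect the main obstacle to be exactly this last equivalence: carefully matching the fiber product $k[G_1]\otimes_{k[G_0]}k[G_2]$ against $k[G]$ through the web of torsor isomorphisms, and pinning down the precise sense in which "linearly disjoint over $L_0$" transfers between the field level and the ring level. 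The faithful flatness of $L/K$ (and of $L_2$ over its base, via Lemma~\ref{lemma: Picard-Vessiot rings flat}) is what lets one descend the injectivity statement from $L$-algebras back to $k$-algebras.
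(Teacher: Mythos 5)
Your proposal is correct and follows essentially the same route as the paper: part (a) from the definitions, the embedding in (b) from the restriction maps (with your kernel/Galois-correspondence argument making the injectivity explicit), and the equivalence reduced — via the torsor isomorphisms, the flatness of $R_1$ over $R_0$ from Lemma~\ref{lemma: Picard-Vessiot rings flat}, and base change — to the injectivity of the multiplication map $R_1\otimes_{R_0}R_2\to R_1\cdot R_2$, which is what the paper does. The one step you flag as the "main obstacle" is indeed where the paper spends its effort (showing that the kernel of $\mu\otimes\mu$ is generated by its intersection with $(R_1\otimes_{R_0}R_2)\otimes_K(1\otimes_{R_0}1)$, forcing $\mu$ to be injective), but your plan correctly identifies both the statement and the tools needed.
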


\begin{proof}
	Assertion (a) is immediate from the definitions. The embedding (\ref{iso}) is induced by the restriction homomorphisms $\gal(L_1L_2/K)\to \gal(L_i/K)$.
	
	We define $G_i=\gal(L_i/K)$ for $i=0,1,2$. Let us first show that the map $R_1\otimes_{R_0}R_2\to L_1\otimes_{L_0}L_2$ is injective. Let $S$ be the multiplicatively closed set $S=R_0\smallsetminus\{0\}$. Then $$R_1\otimes_{R_0}R_2\to S^{-1}(R_1\otimes_{R_0} R_2)= S^{-1}R_1\otimes_{L_0}{S^{-1}R_2} \hookrightarrow L_1\otimes_{L_0}L_2.$$ It thus suffices to show that every element $s\in S$ is a non-zero-divisor in $R_1\otimes_{R_0} R_2$. Multiplication with $s$ is an injective $R_0$-linear map $R_2\to R_2$. Because $R_1$ is flat over $R_0$ (Lemma \ref{lemma: Picard-Vessiot rings flat}), we see that multiplication with $s$ is also an injective map on $R_1\otimes_{R_0} R_2$.
	
	Assume that $L_1$ and $L_2$ are linearly disjoint over $L_0$. Then it follows from the above paragraph that the map $R_1\otimes_{R_0}R_2\to R_1\cdot R_2$ is an isomorphism, where $R_1\cdot R_2$ denotes the ring compositum of $R_1$ and $R_2$ in $L$. For a $k$-algebra $T$ we have differential isomorphisms $$(R_1\cdot R_2)\otimes_k T=R_1\otimes_{R_0}R_2\otimes_k T=(R_1\otimes_k T)\otimes_{(R_0\otimes_k T)}(R_2\otimes_k T),$$
	which allows us to define an inverse to $\gal(L_1L_2/K)\to \gal(L_1/K)\times_{\gal(L_0/K)} \gal(L_2/K)$. This finishes the first direction of the proof.
	
	Let us now assume that the map~(\ref{iso}) is an isomorphism. Equivalently, the comorphism 
	$$k[G_1]\otimes_{k[G_0]}k[G_2]\to k[G_{12}]$$ is an isomorphism.
	We have
	\begin{align*} (R_1\otimes_{R_0}R_2)\otimes_K (R_1\otimes_{R_0}R_2) & =(R_1\otimes_K R_1)\otimes_{(R_0\otimes_K R_0)}(R_2\otimes_K R_2) \\
	&=(R_1\otimes_k k[G_1])\otimes_{(R_0\otimes_k k[G_0])}(R_2\otimes_k k[G_2])\\
	&=(R_1\otimes_{R_0} R_2)\otimes_k (k[G_1]\otimes_{k[G_0]}k[G_2])
	\end{align*}
	and $$(R_1\cdot R_2)\otimes_K (R_1\cdot R_2)=(R_1\cdot R_2)\otimes_k k[G_{12}],$$
	where $G_{12}=\gal(L_1L_2/K)$.  Since $k[G_1]\otimes_{k[G_0]}k[G_2]\to k[G_{12}]$ is an isomorphism, the  kernel of 
	$$(R_1\otimes_{R_0} R_2)\otimes_k (k[G_1]\otimes_{k[G_0]}k[G_2])\to (R_1\cdot R_2)\otimes_k k[G_{12}]$$ is generated by its intersection with $(R_1\otimes_{R_0} R_2)\otimes _k (1\otimes_{k[G_0]} 1)$. Thus the kernel of $$f\colon (R_1\otimes_{R_0} R_2)\otimes_K (R_1\otimes_{R_0} R_2)\to (R_1\cdot R_2)\otimes_K (R_1\cdot R_2)$$ is generated by its intersection with $(R_1\otimes_{R_0} R_2)\otimes_K (1\otimes_{R_0}1)$, where we write $f$ for the composition 
	\begin{align*} (R_1\otimes_{R_0} R_2)\otimes_K (R_1\otimes_{R_0} R_2) & \to (R_1\otimes_{R_0} R_2)\otimes_k (k[G_1]\otimes_{k[G_0]}k[G_2])\\
	& \to (R_1\cdot R_2)\otimes_k k[G_{12}] \to  (R_1\cdot R_2)\otimes_K (R_1\cdot R_2).
	\end{align*}
%	{\small
%	$$ (R_1\otimes_{R_0} R_2)\otimes_K (R_1\otimes_{R_0} R_2) \to (R_1\otimes_{R_0} R_2)\otimes_k (k[G_1]\otimes_{k[G_0]}k[G_2]) \to (R_1\cdot R_2)\otimes_k k[G_{12}] \to  (R_1\cdot R_2)\otimes_K (R_1\cdot R_2).$$ 
%	}
	As $G_1$ and $G_2$ are both quotients of $G_{12}$ and the torsor isomorphisms are compatible with taking quotients (compare with \cite[Lemma 2.8]{BachmayrHarbaterHartmannWibmer:DifferentialEmbeddingProblems}), the following diagram commutes for both $i=1,2$:
	
	$$
	\xymatrix{
		R_i\otimes_K R_i \ar[d] & R_i\otimes_k k[G_i] \ar[d] \ar_-\simeq[l] \\
		(R_1\cdot R_2)\otimes_K (R_1\cdot R_2)   & (R_1\cdot R_2)\otimes_k k[G_{12}] \ar^-\simeq[l]	
	}
	$$ It follows that $f$ equals $\mu \otimes \mu$ with $\mu \colon R_1\otimes_{R_0} R_2 \to R_1\cdot R_2$ the multiplication morphism.
		
	We conclude that the kernel of $\mu \otimes \mu$ is generated by its intersection with $(R_1\otimes_{R_0} R_2)\otimes_K (1\otimes_{R_0}1)$ which is only possible if $\mu$ is injective. But then also $L_1\otimes_{L_0} L_2\to L$ is injective because $L_1\otimes_{L_0} L_2$ is a localization of $R_1\otimes_{R_0}R_2$, so $L_1$ and $L_2$ are linearly disjoint over $L_0$.
\end{proof}

\subsection{Differential embedding problems and free differential Galois groups}

We define differential embedding problems as in \cite{BachmayrHarbaterHartmannWibmer:DifferentialEmbeddingProblems}, \cite{BachmayrHarbaterHartmann:DifferentialEmbeddingProblemsOverLaurentSeriesFields} and \cite{BachmayrHartmannHarbaterPopLarge}. The only difference here is that we omit the ``of finite type assumption'' present in these articles.

\begin{defi}
	A {\it differential embedding problem} over $K$ is a pair $(\alpha\colon G\twoheadrightarrow H,\ L/K)$, where $\alpha$ is an epimorphism of proalgebraic groups and $L/K$ is a Picard-Vessiot extension with differential Galois group $H$. A {\it (proper) solution} is a Picard-Vessiot extension $M/K$ containing $L$, together with an isomorphism $G\simeq \gal(M/K)$ that identifies $\alpha\colon G\twoheadrightarrow H$ with the restriction map $\gal(M/K)\twoheadrightarrow \gal(L/K)$. 
	
	A differential embedding problem $(\alpha\colon G\twoheadrightarrow H,\ L/K)$, is {\it of finite type} if $G$ is an algebraic group. (Then necessarily $L/K$ is also of finite type.) The {\it kernel} of $(\alpha\colon G\twoheadrightarrow H,\ L/K)$ is the kernel of $\alpha$.  The embedding problem is {\it trivial} if its kernel is trivial (i.e., $\alpha$ is an isomorphism).
\end{defi}

Abusing notation, we will sometimes refer to the Picard-Vessiot extension $M/K$ as a solution to $(\alpha\colon G\twoheadrightarrow H,\ L/K)$. In particular, if $M'/K$ is a Picard-Vessiot extension containing $L$, then the \emph{compositum of all solutions of $(\alpha\colon G\twoheadrightarrow H,\ L/K)$ in $M'$} is the compositum of all Picard-Vessiot extensions $M/K$ contained in $M'$ and containing $L$ such that there exists an isomorphism $G\simeq \gal(M/K)$ that identifies $\alpha$ with the restriction map $\gal(M/K)\twoheadrightarrow \gal(L/K)$.

Now assume that the field of constants $k$ is algebraically closed. Then differential embedding problems over $K$ correspond to embedding problems for the absolute differential Galois group of $K$ as we will now explain.

Let $\widetilde{K}/K$ denote a complete Picard-Vessiot compositum for $K$ and let $\Gamma=\gal(\widetilde{K}/K)$ denote the absolute differential Galois group of $K$.

Let $H$ be a proalgebraic group. To specify an epimorphism $\beta\colon \Gamma\twoheadrightarrow H$ is equivalent to specifying a Picard-Vessiot extension $L/K$ together with an isomorphism $H\simeq \gal(L/K)$: If $\beta\colon \Gamma\twoheadrightarrow H$ is an epimorphism, then $L=\widetilde{K}^{\ker(\beta)}$ is a Picard-Vessiot extension of $K$ and the restriction map $\Gamma\twoheadrightarrow \gal(L/K)$ has kernel $\ker(\beta)$ (by the Galois correspondence). Thus $H\simeq \Gamma/\ker(\beta)\simeq \gal(L/K)$.
 Conversely, given a Picard-Vessiot extension $L/K$ together with an isomorphism $H\simeq \gal(L/K)$, there exists an embedding of $L/K$ into $\widetilde{K}/K$. The image of this embedding is unique and so there is no harm in also denoting this image by $L$. The restriction map then yields an epimorphism $\beta\colon \Gamma\twoheadrightarrow \gal(L/K)\simeq H$.
 
 Thus to specify a differential embedding problem $(\alpha\colon G \twoheadrightarrow H,\ L/K)$ over $K$ is equivalent to specifying an embedding problem $(\alpha\colon G \twoheadrightarrow H,\ \beta\colon\Gamma\twoheadrightarrow H)$ for $\Gamma$. Moreover, if $\f\colon\Gamma\twoheadrightarrow G$ is a solution of $(\alpha\colon G \twoheadrightarrow H,\ \beta\colon\Gamma\twoheadrightarrow H)$, then $M=\widetilde{K}^{\ker(\f)}$ contains $L=\widetilde{K}^{\ker{(\beta)}}$ and 
 $$
 \xymatrix{
 G \ar@{->>}^\alpha[r] \ar_-\simeq[d] & H \ar^-\simeq[d] \\
 \Gamma/\ker(\f) \ar_-\simeq[d] \ar@{->>}[r] & \Gamma/\ker(\beta) \ar^-\simeq[d] \\
 \gal(M/K) \ar@{->>}[r] & \gal(L/K)	
 }
 $$
 commutes. Conversely, if $M/K$ and $G\simeq \gal(M/K)$ constitute a solution of a differential embedding problem $(\alpha\colon G \twoheadrightarrow H,\ L/K)$, then $M/K$ embeds into $\widetilde{K}/K$ and
 $$
 \xymatrix{
 \Gamma \ar@{->>}[d] \ar@{->>}[rd] & \\
 \gal(M/K) \ar@{->>}[r] \ar_\simeq[d] & \gal(L/K) \ar^\simeq[d] \\
 G \ar@{->>}^\alpha[r] & H	
 }
 $$
commutes.

We are now prepared to provide characterizations of the freeness of the absolute differential Galois group in terms of differential embedding problems. In the following theorem, all composita are as fields, and are taken inside a fixed complete Picard-Vessiot compositum.  The seven conditions below respectively parallel the corresponding conditions in \cite[Theorem~3.24]{Wibmer:FreeProalgebraicGroups}, where the context was that of abstract proalgebraic groups (or more generally, pro-$\CC$-groups).

\begin{theo} \label{theo: equivalent to have free differential Galois group}
	Let $K$ be a differential field of cardinality $\kappa$ with an algebraically closed field of constants. Then the following statements are equivalent:
	
	\begin{enumerate}[(i)]
		\item The absolute differential Galois group of $K$ is the free proalgebraic group on a set of cardinality~$\kappa$.
		\item Every differential embedding problem $(G\twoheadrightarrow H,\ L/K)$ with $\rank(L/K)<\kappa$ and $\rank(G)\allowbreak\leq \kappa$ has a solution.
		\item Every differential embedding problem $(G\twoheadrightarrow H,\ L/K)$ with $\rank(L/K)<\kappa$ and algebraic kernel has a solution.
		\item For every differential embedding problem $(G\twoheadrightarrow H,\ L/K)$ of finite type and every Picard-Vessiot extension $M/K$ containing $L$ with $\rank(M/K)<\kappa$, there exists a solution $L'$, such that $L'$ and $M$ are linearly disjoint over $L$.
		\item For every non-trivial differential embedding problem $(\alpha\colon G\twoheadrightarrow H,\ L/K)$ of finite type and every Picard-Vessiot extension $M/K$ containing $L$ with $\rank(M/K)<\kappa$, there exists a solution $L'$, such that $L'\nsubseteq M$ and such that $\trdeg(L'M/M)>0$ if $\dim(\ker(\alpha))>0$. 
	\item For every non-trivial differential embedding problem $(\alpha\colon G\twoheadrightarrow H,\ L/K)$ of finite type, the compositum $M$ of all solutions satisfies $\rank(M/K)=\kappa$ and it satisfies $\trdeg(M/K)\allowbreak=\kappa$ if $\dim(\ker(\alpha))>0$.
	\item For every non-trivial differential embedding problem
$(G\twoheadrightarrow H,\ L/K)$ of finite type there exist $\kappa$ solutions $M_i$ such that the
$M_i$ are linearly disjoint over $L$.
	\end{enumerate}
\end{theo}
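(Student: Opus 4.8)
The plan is to transport the statement across the dictionary, set up in the paragraphs preceding the theorem, between differential embedding problems over $K$ and embedding problems for the absolute differential Galois group $\Gamma=\gal(\widetilde{K}/K)$, and then to invoke \cite[Theorem~3.24]{Wibmer:FreeProalgebraicGroups} (and, for condition~(vii), \cite[Theorem~3.42]{Wibmer:FreeProalgebraicGroups}), whose seven conditions are exactly the group-theoretic counterparts of (i)--(vii). Concretely, a differential embedding problem $(\alpha\colon G\twoheadrightarrow H,\ L/K)$ is the same datum as an embedding problem $(\alpha\colon G\twoheadrightarrow H,\ \beta\colon\Gamma\twoheadrightarrow H)$, where $\beta$ is the restriction map attached to the (essentially unique) embedding $L\hookrightarrow\widetilde{K}$; and, under this correspondence, solutions $M/K$ taken inside the fixed complete Picard-Vessiot compositum $\widetilde{K}$ correspond bijectively to solutions $\varphi\colon\Gamma\twoheadrightarrow G$ via $M=\widetilde{K}^{\ker(\varphi)}$, compatibly with $\alpha$. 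Since $\rank(\Gamma)\le|K|=\kappa$ (as noted after Lemma~\ref{lemma: rank and dim correspond to usual}, using $|k|\le|K|$), and since by Corollary~3.12 of \cite{Wibmer:FreeProalgebraicGroups} the free proalgebraic group on a set of cardinality $\kappa\ge|k|$ has rank $\kappa$, condition~(i) is precisely the assertion "$\Gamma$ is free of rank $\kappa$" occurring in \cite[Theorem~3.24]{Wibmer:FreeProalgebraicGroups}. It therefore remains to match each of (ii)--(vii) with the corresponding numbered condition there.

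For this matching the two numerical invariants appearing in the conditions must be translated. First, by Lemma~\ref{lemma: rank of PVextension} together with Lemma~\ref{lemma: rank and dim correspond to usual} we have $\rank(L/K)=\rank(\gal(L/K))$ for every Picard-Vessiot extension $L/K$, and the smallest cardinality of a family of equations for which $L/K$ is Picard-Vessiot equals $\rank(L/K)$; hence the hypotheses ``$\rank(L/K)<\kappa$'' and ``$\rank(M/K)<\kappa$'' in (ii)--(v) become rank conditions on the corresponding quotients of $\Gamma$, and ``$\rank(G)\le\kappa$'' in (ii) is unchanged since $G$ is already proalgebraic. Second, $\trdeg(L/K)=\dim(\gal(L/K))$ by Lemma~\ref{lemma: rank and dim correspond to usual}, so the transcendence-degree clauses (``$\trdeg(L'M/M)>0$'' in (v), ``$\trdeg(M/K)=\kappa$'' in (vi)) become the dimension conditions in \cite[Theorem~3.24]{Wibmer:FreeProalgebraicGroups}; here $L'M$ is a Picard-Vessiot extension by Lemma~\ref{lemma: linearly disjoint PV extensions}(a), so the invariant makes sense, and $\dim(\ker(\alpha))$ is already purely group-theoretic.

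The remaining ingredient is the translation of linear disjointness, supplied by Lemma~\ref{lemma: linearly disjoint PV extensions}(b): for Picard-Vessiot extensions $L\subseteq L'$ and $L\subseteq M$ inside $\widetilde{K}$, the extensions $L'$ and $M$ are linearly disjoint over $L$ if and only if the canonical map $\gal(L'M/K)\to\gal(L'/K)\times_{\gal(L/K)}\gal(M/K)$ is an isomorphism, which is exactly the group-theoretic complementarity/independence condition in clauses~(4) and~(5) of \cite[Theorem~3.24]{Wibmer:FreeProalgebraicGroups}. For (vii) one iterates over finite subfamilies: a family $(M_i)$ of solutions is linearly disjoint over $L$ precisely when, for every finite subset, $\gal$ of the compositum is the fibre product over $\gal(L/K)$ of the corresponding $\gal(M_i/K)$; passing to the colimit over finite subsets this matches the ``$\kappa$ independent solutions'' condition treated in \cite[Theorem~3.42]{Wibmer:FreeProalgebraicGroups}. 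Finally, in (vi) the compositum of all solutions inside $\widetilde{K}$ corresponds, by the Galois correspondence and \eqref{lemma: intersection}, to the quotient of $\Gamma$ by the intersection of the kernels of all solutions, so its rank and transcendence degree are the rank and dimension of that quotient, as required.

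The main obstacle I anticipate is not a single deep step but the bookkeeping needed to verify that the correspondence is strictly compatible on the ``solution'' side: one must check that $M\mapsto(\Gamma\twoheadrightarrow\gal(M/K))$ carries solutions of the differential embedding problem \emph{exactly} onto solutions of the group embedding problem with the matching identification of $\alpha$; that field composita correspond to intersections of kernels (so that (vi) and (vii) transport cleanly); that ``$L'\not\subseteq M$'' in (v) corresponds to $\ker(\Gamma\twoheadrightarrow\gal(M/K))\not\subseteq\ker(\varphi)$; and --- most delicately --- that the ``of finite type'' hypotheses in (iv)--(vii) align with the ``$\CC$-embedding problem'' hypotheses of \cite[Theorems~3.24 and~3.42]{Wibmer:FreeProalgebraicGroups}, while (ii)--(iii) genuinely require the pro-$\CC$ versions, with $G$ allowed to be non-algebraic. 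Once these compatibilities are in place, the equivalence of (i)--(vii) follows at once from \cite[Theorems~3.24 and~3.42]{Wibmer:FreeProalgebraicGroups}.
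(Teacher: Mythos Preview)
Your overall strategy---translate across the dictionary and invoke \cite[Theorems~3.24 and~3.42]{Wibmer:FreeProalgebraicGroups}---is exactly what the paper does, and your translation of ranks, dimensions, linear disjointness, and composita is correct.  However, there is a genuine gap: you never verify that each of conditions (ii)--(vii) forces $\rank(\Gamma)=\kappa$.  You only observe $\rank(\Gamma)\le\kappa$, and you observe that (i) gives $\rank(\Gamma)=\kappa$ via \cite[Cor.~3.12]{Wibmer:FreeProalgebraicGroups}.  But \cite[Theorem~3.42]{Wibmer:FreeProalgebraicGroups} asserts ``saturated $\Leftrightarrow$ free on a set of cardinality $\kappa$'' only under the standing hypothesis $\rank(\Gamma)=\kappa\ge|k|$; without first establishing $\rank(\Gamma)\ge\kappa$ from (ii)--(vii), you cannot pass from ``$\Gamma$ is saturated'' back to (i).  The paper devotes the first half of its proof to precisely this verification, with a separate (and in some cases not entirely obvious) argument for each condition: for (ii) one realizes a group of rank $\kappa$ as a quotient; for (iii) and (v) one considers the compositum of all realizations of a fixed nontrivial $G$ and derives a contradiction if its rank is $<\kappa$; for (vi) and (vii) one reads $\rank(M/K)=\kappa$ off the conclusion applied to $(G\twoheadrightarrow 1,\ K/K)$.

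A smaller point: you misplace the role of \cite[Theorem~3.42]{Wibmer:FreeProalgebraicGroups}.  It is not a special tool for condition~(vii); rather, it is the bridge ``(i) $\Leftrightarrow$ $\Gamma$ saturated'' used uniformly once the rank hypothesis is in hand, while \cite[Theorem~3.24]{Wibmer:FreeProalgebraicGroups} supplies the equivalence of saturation with the group-theoretic forms of (ii)--(vii).  For the linear-disjointness translation in (vii), the reduction to finite subfamilies is handled in the paper via \cite[Remark~3.23]{Wibmer:FreeProalgebraicGroups} together with Lemma~\ref{lemma: linearly disjoint PV extensions}.
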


\begin{proof}
	Let $\widetilde{K}/K$ be a complete Picard-Vessiot compositum for $K$, and let $\Gamma=\gal(\widetilde{K}/K)$ denote the absolute differential Galois group of $K$.
	We will first verify that each of the conditions (i),\ldots,(vii) implies that $\rank(\Gamma)=\kappa$. Since $\rank(\Gamma)\leq|K|=\kappa$ it suffices to show that $\rank(\Gamma)\geq\kappa$.
	
Since $|k|\leq|K|$, condition (i) implies $\rank(\Gamma)=\kappa$ by Corollary 3.12 in \cite{Wibmer:FreeProalgebraicGroups}. To see that (ii) implies $\rank(\Gamma)=\kappa$, fix a proalgebraic group $G$ over $k$ with $\rank(G)=\kappa$ (such a $G$ exists by \cite[Ex.\ 3.3]{Wibmer:FreeProalgebraicGroups}).  Then (ii) implies that $G$ is a quotient of $\Gamma$ (by choosing $L=K$ and $H=1$) and thus $\rank(\Gamma)=\kappa$.

To see that (iii) implies $\rank(\Gamma)=\kappa$, fix a non-trivial algebraic group $G$ and consider the compositum $L$ of all Picard-Vessiot extensions with differential Galois group isomorphic to $G$ inside $\widetilde{K}$. If $\rank(L/K)\geq\kappa$ then also $\rank(\Gamma)\geq\kappa$. So we suppose that $\rank(L/K)<\kappa$. Then (iii) applied to the differential embedding problem $(G\times \gal(L/K)\twoheadrightarrow \gal(L/K),\ L/K)$ yields a Picard-Vessiot extension $M/K$ containing $L$. Without loss of generality we may assume that $M$ is contained in $\widetilde{K}$. The Picard-Vessiot extension $L_1/K$ that corresponds to the kernel of the projection $G\times \gal(L/K)\twoheadrightarrow G$ has differential Galois group isomorphic to $G$ but is not contained in $L$; a contradiction.

Clearly (iv)$\Rightarrow$(v), so it suffices to show that (v) implies $\rank(\Gamma)=\kappa$.

We argue in a fashion similar to what we did for (iii). Fix a non-trivial algebraic group $G$ and let $M$ be the compositum of all Picard-Vessiot extensions with differential Galois group isomorphic to $G$ inside $\widetilde{K}$. If $\rank(M/K)\geq\kappa$ we are done. If not, we can apply (v) to the differential embedding problem $(G\twoheadrightarrow 1,\ K/K)$ to find a solution $L'$ such that $L'\nsubseteq M$. This solution $L'$ is a Picard-Vessiot extension of $K$ with differential Galois group isomorphic to $G$ contained in $\widetilde{K}$ but not in $M$. This contradicts the definition of $M$.

To see that (vi) and (vii) both imply $\rank(\Gamma)=\kappa$, we can choose a non-trivial algebraic group $G$ and consider the differential embedding problem $(G\twoheadrightarrow 1,\ K/K)$. Both (vi) and (vii) imply that there exists a Picard-Vessiot extension $M$ of $K$ with $\rank(M/K)=\kappa$ (in (vii) we let $M$ be the compositum of the fields $M_i$). But then also $\rank(\Gamma)=\rank(\widetilde{K}/K)=\kappa$.

\medskip

Now that we know that $\rank(\Gamma)=\kappa\geq |k|$ in all seven cases, we can use Theorem 3.42 in \cite{Wibmer:FreeProalgebraicGroups} to see that (i) is equivalent to $\Gamma$ being saturated and that statements (ii) to (vii) are merely reformulations of the different characterizations of saturation in \cite[Theorem 3.24]{Wibmer:FreeProalgebraicGroups}: (ii) corresponds to (i) in  \cite[Theorem 3.24]{Wibmer:FreeProalgebraicGroups}, while (iii) corresponds to (ii) in Theorem \cite[Theorem 3.24]{Wibmer:FreeProalgebraicGroups}. To see that (iv) here corresponds to (iv) in \cite[Theorem 3.24]{Wibmer:FreeProalgebraicGroups}, we use Lemma~\ref{lemma: linearly disjoint PV extensions} above and Remark 3.28 in \cite{Wibmer:FreeProalgebraicGroups}. 

For the correspondence between (v) here and (v) in  \cite[Theorem 3.24]{Wibmer:FreeProalgebraicGroups}, we conform to the notation in \cite[Theorem 3.24]{Wibmer:FreeProalgebraicGroups} by setting $N=\gal(\widetilde K/M)$ and $\phi\colon \Gamma \twoheadrightarrow G$ given by $\Gamma \twoheadrightarrow \gal(L'/K)\cong G$. Using Equation~(\ref{lemma: intersection}) of Section~\ref{subsec: dgt}, we obtain
$$\phi(N)\cong N/\operatorname{ker}(\phi)\cap N=\gal(\widetilde K^{\operatorname{ker}(\phi)\cap N}/M)=\gal(\widetilde K^{\operatorname{ker}(\phi)}\widetilde K^N/M)=\gal(L'M/M).$$

To see that (vi) here corresponds to (vi) in Theorem \cite[Theorem 3.24]{Wibmer:FreeProalgebraicGroups}, we note that the intersection of kernels of solutions in (vi) in Theorem \cite[Theorem 3.24]{Wibmer:FreeProalgebraicGroups} corresponds to the compositum of all solutions in (vi) again by Equation~(\ref{lemma: intersection}).
	
Finally, to show that (vii) corresponds to (vii) in Theorem \cite[Theorem 3.24]{Wibmer:FreeProalgebraicGroups}, let $I$ be a set of cardinality $\kappa$ and consider the $I$-fold fiber product $\prod_{i\in I}(G\twoheadrightarrow H)$ of $G$ with itself over $H$. We claim that the solutions $M_i$ for $i\in I$ are linearly disjoint over $L$ if and only if the product map $\prod \phi_i \colon \Gamma\to\prod_{i\in I}(G\twoheadrightarrow H)$ is an epimorphism, where $\phi_i\colon \Gamma \twoheadrightarrow \gal(M_i/K)=G$. By \cite[Remark~3.23]{Wibmer:FreeProalgebraicGroups} that product map is an epimorphism if and only if the product maps $\prod \phi_j \colon \Gamma\to\prod_{j\in J}(G\twoheadrightarrow H)$ are epimorphisms for every finite subset $J\subseteq I$ which in turn is the case if and only if the the embedding $\gal(\prod_{j\in J}M_j/K)\to \prod_{j\in J}(\gal(M_j/K)\twoheadrightarrow H)$ given by restriction
is an epimorphism. That however is equivalent to $M_j$, $j\in J$, being linearly disjoint over $L$ by repeatedly applying Lemma \ref{lemma: linearly disjoint PV extensions} and the claim follows.
\end{proof}	

The following corollary explains how Matzat's conjecture goes beyond the solution of the inverse problem over $k(x)$. The solution of the inverse problem over $k(x)$ only tells us that every algebraic group is a differential Galois group over $k(x)$. Matzat's conjecture tells us which \emph{proalgebraic} groups are differential Galois groups over $k(x)$ and it tells us in how many different ways an algebraic group can occur as a differential Galois group.  

\begin{cor}\label{cor: inverse}
	Let $K$ be a differential field with an algebraically closed field of constants such that the absolute differential Galois group of $K$ is the free proalgebraic group on a set of cardinality $|K|$. Then:
	\begin{enumerate}[(a)]
		\item A proalgebraic group $G$ is a differential Galois group over $K$ if and only if $\rank(G)\leq |K|$. In particular, every algebraic group is a differential Galois group over $K$. 
		\item For a non-trivial algebraic group $G$, the set of isomorphism classes of Picard-Vessiot extensions with differential Galois group isomorphic to $G$ has cardinality $|K|$.
	\end{enumerate}
\end{cor}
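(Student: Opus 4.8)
The plan is to deduce both parts directly from Theorem~\ref{theo: equivalent to have free differential Galois group} together with Lemmas~\ref{lemma: rank of PVextension}, \ref{lemma: rank and dim correspond to usual} and~\ref{lemma: isomorphic}. Throughout write $\kappa=|K|$, let $\widetilde K/K$ be the complete Picard-Vessiot compositum, and let $\Gamma=\gal(\widetilde K/K)$. By hypothesis $\Gamma$ is free on a set of cardinality $\kappa$, so (using $|k|\le|K|$ and Corollary~3.12 of \cite{Wibmer:FreeProalgebraicGroups}) $\rank(\Gamma)=\kappa$; moreover condition~(i) of Theorem~\ref{theo: equivalent to have free differential Galois group} holds, hence so do (ii)--(vii). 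Note also that since $K$ has characteristic zero it is infinite, so $\kappa\ge\aleph_0$; in particular $\rank(\Gamma)=\kappa$ is infinite, so $\widetilde K/K$ is not of finite type and Lemma~\ref{lemma: rank of PVextension} applies to it.

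For part~(a), the forward implication is immediate: if $G\cong\gal(L/K)$ for a Picard-Vessiot extension $L/K$, then $\rank(G)=\rank(L/K)$ by Lemma~\ref{lemma: rank and dim correspond to usual}, and $\rank(L/K)\le|K|$ by the remark following Lemma~\ref{lemma: rank of PVextension}. For the converse, given $G$ with $\rank(G)\le\kappa$, I would apply condition~(ii) of Theorem~\ref{theo: equivalent to have free differential Galois group} to the differential embedding problem $(G\twoheadrightarrow 1,\ K/K)$: here $\rank(K/K)=0<\kappa$ and $\rank(G)\le\kappa$, so there is a solution $M/K$, i.e.\ a Picard-Vessiot extension with $\gal(M/K)\cong G$. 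The ``in particular'' statement then follows since a non-trivial algebraic group has rank $1\le\kappa$ and the trivial group has rank $0$.

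For part~(b), fix a non-trivial algebraic group $G$ and let $\mathcal S$ denote the set of isomorphism classes of Picard-Vessiot extensions of $K$ with differential Galois group isomorphic to $G$; by Remark~\ref{rk: PV fin equiv} every representative $L/K$ is of finite type, hence finitely generated as a field extension of $K$. For the upper bound $|\mathcal S|\le\kappa$: each such $L/K$ embeds into $\widetilde K$ with a well-defined image, and isomorphic ones have the same image, so $[L/K]\mapsto(\text{image of }L\text{ in }\widetilde K)$ gives an injection of $\mathcal S$ into the set of finitely generated subextensions of $\widetilde K/K$; since $\rank(\widetilde K/K)=\rank(\Gamma)=\kappa$, Lemma~\ref{lemma: rank of PVextension}(ii) shows $\widetilde K$ is generated over $K$ by $\kappa$ elements, whence $|\widetilde K|=\kappa$ and the number of finite subsets of $\widetilde K$ is $\kappa$. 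For the lower bound $|\mathcal S|\ge\kappa$: apply condition~(vii) of Theorem~\ref{theo: equivalent to have free differential Galois group} to the non-trivial finite-type embedding problem $(G\twoheadrightarrow 1,\ K/K)$ to obtain $\kappa$ solutions $M_i\subseteq\widetilde K$, each with $\gal(M_i/K)\cong G$, that are linearly disjoint over $K$. Since $G\ne 1$ we have $M_i\ne K$, so for $i\ne j$ linear disjointness of $M_i$ and $M_j$ over $K$ forces $M_i\ne M_j$; hence by Lemma~\ref{lemma: isomorphic} the $M_i$ lie in pairwise distinct classes of $\mathcal S$, giving $|\mathcal S|\ge\kappa$. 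Combining the two bounds yields $|\mathcal S|=\kappa=|K|$.

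The corollary is essentially an assembly of results already established, so no serious obstacle arises; the one point demanding care is the passage in part~(b) from ``the $M_i$ are linearly disjoint over $K$'' to ``the $M_i$ represent distinct isomorphism classes'', which is exactly where the convention that all solutions are taken inside the fixed compositum $\widetilde K$ (so that Lemma~\ref{lemma: isomorphic} applies, there being no new constants) is needed. The remaining work is routine cardinality bookkeeping: verifying $|\widetilde K|=\kappa$ and that $\widetilde K/K$ has at most $\kappa$ finitely generated subextensions.
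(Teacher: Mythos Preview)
Your proof is correct and follows essentially the same route as the paper's: part~(a) and the lower bound in part~(b) are argued identically (condition~(ii) for the inverse problem, condition~(vii) plus Lemma~\ref{lemma: isomorphic} for the $\kappa$ non-isomorphic extensions). The only difference is your upper bound in~(b): you count finitely generated subextensions of $\widetilde K$ via $|\widetilde K|=\kappa$, whereas the paper simply observes that there are only $|K|$ linear differential equations over $K$, which is a shorter path to the same bound.
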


\begin{proof}
Condition (ii) of Theorem \ref{theo: equivalent to have free differential Galois group} applied to the differential embedding problem $(G\twoheadrightarrow 1,\ K/K)$ shows that every proalgebraic group $G$ with $\rank(G)\leq |K|$ is a differential Galois group over $K$. On the other hand, $\rank(G)\leq |K|$ for every differential Galois group $G$ over $K$.  This proves the first part.

For the second part, condition (vii) of Theorem~\ref{theo: equivalent to have free differential Galois group} shows that the differential embedding problem $(G\twoheadrightarrow 1,\ K/K)$ has a set of $|K|$ solutions in $\widetilde{K}$ that are linearly disjoint, and in particular unequal.  By Lemma~\ref{lemma: isomorphic} they are non-isomorphic.  So the set of isomorphism classes of Picard-Vessiot extensions with differential Galois group isomorphic to $G$ has cardinality at least $|K|$, and hence exactly $|K|$, since the reverse inequality follows from the fact that there are only $|K|$ linear differential equations over $K$.
\end{proof}	

We note that the cardinality of the set of all isomorphism classes of Picard-Vessiot extensions with a fixed differential Galois group has been studied in detail by Kovacic in \cite{Kovacic:TheInverseProblemInTheGaloisTheoryOfDifferentialFields} for the case of solvable algebraic groups. In particular, Kovacic proved, as predicted by Matzat's conjecture, that the cardinality of the set of all isomorphism classes of Picard-Vessiot extensions of $K=k(x)$ with a fixed non-trivial connected solvable differential Galois group has cardinality $|K|$.

For a countable differential field the equivalence of (i) and (iii) in Theorem \ref{theo: equivalent to have free differential Galois group} reduces to the following corollary. Alternatively, this can be deduced from Corollary 3.43 in \cite{Wibmer:FreeProalgebraicGroups} by a translation from embedding problems to differential embedding problems. 

\begin{cor} \label{cor: differential Iwasawa}
	Let $K$ be a countable differential field with an algebraically closed field of constants. Then the absolute differential Galois group of $K$ is free on a countably infinite set if and only if every differential embedding problem of finite type over $K$ is solvable. \qed
\end{cor}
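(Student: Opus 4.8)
The plan is to specialize Theorem~\ref{theo: equivalent to have free differential Galois group} to $\kappa=\aleph_0$. Since a field of characteristic zero is infinite, a countable such field has cardinality exactly $\aleph_0$, so for our $K$ we may take $\kappa=\aleph_0$ in that theorem. Condition~(i) there is verbatim the statement that the absolute differential Galois group of $K$ is free on a countably infinite set, so it will suffice to show that, for $\kappa=\aleph_0$, condition~(iii) of Theorem~\ref{theo: equivalent to have free differential Galois group} is nothing but the assertion that every differential embedding problem of finite type over $K$ is solvable. By the remarks following Lemma~\ref{lemma: rank of PVextension}, a Picard-Vessiot extension $L/K$ has $\rank(L/K)<\aleph_0$ exactly when $L/K$ is trivial (rank $0$) or of finite type (rank $1$); in both cases $\gal(L/K)$ is algebraic.

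First I would check that every differential embedding problem of finite type satisfies the hypotheses of condition~(iii). If $(\alpha\colon G\twoheadrightarrow H,\ L/K)$ is of finite type, then $G$ is algebraic, hence so is the quotient $H\cong \gal(L/K)$; thus $L/K$ is of finite type and $\rank(L/K)\le 1<\aleph_0$, and $\ker(\alpha)$ is a closed subgroup scheme of the algebraic group $G$, so it is algebraic. Hence condition~(iii) yields a solution. Conversely, suppose $(\alpha\colon G\twoheadrightarrow H,\ L/K)$ is an arbitrary differential embedding problem with $\rank(L/K)<\aleph_0$ and $\ker(\alpha)$ algebraic. By the previous paragraph $H\cong\gal(L/K)$ is algebraic, and then $G$ sits in an exact sequence $1\to\ker(\alpha)\to G\to H\to 1$ with both outer terms algebraic, whence $G$ is algebraic; so the embedding problem is of finite type and is solvable by hypothesis. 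This shows that condition~(iii) for $\kappa=\aleph_0$ is equivalent to the solvability of all finite-type differential embedding problems, and the corollary then follows from Theorem~\ref{theo: equivalent to have free differential Galois group}.

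The single step needing justification beyond unwinding definitions, and the one I would flag as the ``hard part'' (though it is standard), is that an extension $1\to N\to G\to H\to 1$ of affine group schemes over $k$ with $N$ and $H$ algebraic has $G$ algebraic. For this one notes that $G\to H$ is faithfully flat and that its base change along itself, namely the projection $G\times_H G\cong G\times N\to G$, is of finite type; since ``of finite type'' is fppf-local on the target, $G\to H$ is of finite type, and as $H$ is of finite type over $k$, so is $G$. (Alternatively, one can bypass Theorem~\ref{theo: equivalent to have free differential Galois group} altogether and deduce the corollary directly from Corollary~3.43 of \cite{Wibmer:FreeProalgebraicGroups} via the dictionary, recorded above, between embedding problems for the absolute differential Galois group and differential embedding problems over $K$.) No genuine obstacle is expected; all the real work is already contained in Theorem~\ref{theo: equivalent to have free differential Galois group}.
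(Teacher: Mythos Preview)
Your proof is correct and follows exactly the approach the paper indicates: specialize the equivalence (i)$\Leftrightarrow$(iii) of Theorem~\ref{theo: equivalent to have free differential Galois group} to $\kappa=\aleph_0$, and you even mention the alternative via \cite[Cor.~3.43]{Wibmer:FreeProalgebraicGroups} that the paper also notes. You supply more detail than the paper (which leaves the reduction to the reader), in particular the verification that an extension of algebraic groups by an algebraic kernel is algebraic, but the strategy is identical.
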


We conclude with a special case of Matzat's conjecture.

\begin{theo} \label{theo: Matzat's conjecture}
	Matzat's conjecture (as stated in the introduction) is true if the field of constants $k$ is countable and of infinite transcendence degree over $\mathbb{Q}$. In other words, Matzat's conjecture is true for the field $k=\overline{\mathbb{Q}(y_1,y_2,\ldots)}$, the algebraic closure of a field of rational functions in countably many variables over $\mathbb{Q}$.
\end{theo}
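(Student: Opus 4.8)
The plan is to deduce this from Corollary~\ref{cor: differential Iwasawa} together with the main result of \cite{BachmayrHartmannHarbaterPopLarge}. First I would record the elementary properties of the field $k=\overline{\mathbb{Q}(y_1,y_2,\ldots)}$: the subfield $\mathbb{Q}(y_1,y_2,\ldots)=\bigcup_{n}\mathbb{Q}(y_1,\ldots,y_n)$ is a countable union of countable fields, hence countable, so its algebraic closure $k$ is a countable algebraically closed field of characteristic zero of transcendence degree $\aleph_0$ over $\mathbb{Q}$. Conversely, any countable algebraically closed field of characteristic zero of infinite transcendence degree over $\mathbb{Q}$ is isomorphic to this one, so the two formulations in the statement are equivalent and it suffices to treat $k=\overline{\mathbb{Q}(y_1,y_2,\ldots)}$. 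Since $k$ is infinite, $K:=k(x)$ is countable as well, with $|K|=|k|=\aleph_0$, and its field of constants is $K^\de=k$, which is algebraically closed. Thus the hypotheses of Corollary~\ref{cor: differential Iwasawa} are satisfied; and since a countably infinite set has cardinality $|k|$, that corollary says exactly that Matzat's conjecture holds for $k(x)$ if and only if every differential embedding problem of finite type over $k(x)$ is solvable.

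It therefore remains to verify that every differential embedding problem of finite type over $k(x)$ has a proper solution. This is precisely the main result of \cite{BachmayrHartmannHarbaterPopLarge}: since $k$ is algebraically closed of characteristic zero and of infinite transcendence degree over $\mathbb{Q}$, every differential embedding problem of finite type over $k(x)$ is solvable. Here our differential embedding problems of finite type are exactly the differential embedding problems considered in \cite{BachmayrHartmannHarbaterPopLarge} (our definition merely drops the finite-type hypothesis built into theirs, as noted just before the definition above), so there is no discrepancy between the two notions. Feeding this into Corollary~\ref{cor: differential Iwasawa} then completes the proof.

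I do not expect a genuine mathematical obstacle at this stage: the substantive content has already been isolated, on the one hand in Theorem~\ref{theo: equivalent to have free differential Galois group} and Corollary~\ref{cor: differential Iwasawa} (which repackage the abstract freeness criteria of \cite{Wibmer:FreeProalgebraicGroups} in differential Galois-theoretic terms), and on the other hand in the solvability of finite-type differential embedding problems over $k(x)$ established in \cite{BachmayrHartmannHarbaterPopLarge}. The only care required is bookkeeping: matching the cardinal $|k|$ appearing in Matzat's conjecture with the ``countably infinite set'' of Corollary~\ref{cor: differential Iwasawa}, and confirming that the finite-type hypotheses on differential embedding problems in the two inputs literally coincide.
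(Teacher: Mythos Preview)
Your proposal is correct and follows essentially the same route as the paper's proof: observe that $K=k(x)$ is countable, apply Corollary~\ref{cor: differential Iwasawa} to reduce to solvability of all finite-type differential embedding problems over $k(x)$, and then invoke \cite[Cor.~4.5]{BachmayrHartmannHarbaterPopLarge}. The extra bookkeeping you spell out (equivalence of the two formulations of $k$, matching $|k|$ with ``countably infinite'', and checking the notions of finite-type embedding problems agree) is sound and is simply left implicit in the paper.
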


\begin{proof}
	If $k$ is countable, also $K=k(x)$ is countable. Thus, according to Corollary \ref{cor: differential Iwasawa}, it suffices to show that every differential embedding problem of finite type over $K$ is solvable. For $k$ of infinite transcendence degree this has been proved in \cite[Cor.\ 4.6]{BachmayrHartmannHarbaterPopLarge}.
\end{proof}

\medskip

\noindent Author information:

\medskip

\noindent Annette Bachmayr (n\'{e}e Maier): 
Institute of Mathematics, University of Mainz, Staudingerweg 9, 55128 Mainz, Germany.
E-mail: {\tt abachmay@uni-mainz.de}

\medskip

\noindent David Harbater: Department of Mathematics, University of Pennsylvania, Philadelphia, PA 19104-6395, USA. E-mail: {\tt harbater@math.upenn.edu}

\medskip

\noindent Julia Hartmann:  Department of Mathematics, University of Pennsylvania, Philadelphia, PA 19104-6395, USA. E-mail: {\tt hartmann@math.upenn.edu}

\medskip

\noindent Michael Wibmer: Institute of Analysis and Number Theory, Graz University of Technology, Koper\-nikusgasse 24, 8010 Graz, Austria.
E-mail: {\tt wibmer@math.tugraz.at}

\end{document}